\title{2024 Proof Sketches}
\author{Chad Berner, Noah Giddings, John Herr, Palle Jorgensen}
\date{December 2024}
\newtheorem{theorem}{Theorem}
\newtheorem*{theorem*}{Theorem}
\newtheorem*{corollary*}{Corollary}
\newtheorem{proposition}{Proposition}
\newtheorem{corollary}{Corollary}
\newtheorem{example}{Example}
\newtheorem*{example*}{Example}
\theoremstyle{definition}
\newtheorem{definition}{Definition}
\theoremstyle{remark}
\newtheorem*{remark*}{Remark}
\newtheorem*{remarkn*}{Remark on Notation}
\newtheorem{note}{Note}
\newtheorem*{note*}{Note}
\newcommand{\ds}{\displaystyle}
\newcommand{\abs}[1]{\left\vert#1\right\vert}
\newcommand{\set}[1]{\left\{#1\right\}}
\newcommand{\ip}[1]{\left\langle#1\right\rangle}
\newcommand{\p}[1]{\left(#1\right)}
\begin{document}

\title{Fourier Series for Two Dimensional Singular-Fibered Measures}
\begin{abstract}In this paper we study 2D Fourier expansions for a general class of planar measures $\mu$, generally singular, but assumed compactly supported in $\mathbb{R}^2$. We focus on the following question: When does $L^2(\mu)$ admit a 2D system of Fourier expansions? We offer concrete conditions allowing an affirmative answer to the question for a large class of Borel probability measures, and we present an explicit Fourier duality for these cases. Our 2D Fourier analysis relies on a detailed conditioning-analysis. For a given $\mu$, it is based on the corresponding systems of 1D measures consisting of a marginal measure and associated family of conditional measures computed from $\mu$ by the Rokhlin Disintegration Theorem. Our identified $L^2(\mu)$-Fourier expansions are special in two ways: For our measures $\mu$, the Fourier expansions are generally non-orthogonal, but nonetheless, they lend themselves to algorithmic computations. Second, we further stress that our class of 2D measures $\mu$ considered here go beyond what exists in the literature. In particular, our measures do not require affine iterated function system (IFS) properties, but we do study grid IFS measures in detail and provide some technical criteria guaranteeing their admission of Fourier expansions. Our analyses make use of estimates for the Hausdorff dimensions of the measure supports. An important class of examples addressed in this paper is fractal Bedford-McMullen carpets.
\end{abstract}

\maketitle

\section{Introduction}

The general setting for our present paper is a detailed analysis of generalized (non-orthogonal) Fourier series expansions for Borel measures $\mu$  with compact support in two dimensions. To accomplish this, we shall make use of marginal measures and conditional measures. In more detail, we study these families of one-dimensional measures for each choice of our initial two-dimensional measure $\mu$.

Specifically, given a two-dimensional measure $\mu$ , for each of the two one-dimensional marginal measures computed from $\mu$, we then introduce the respective families of conditional measures, referred to here as fibers or slices. We make this analysis explicit with the use of a corresponding pair of direct integral disintegration decompositions for the Hilbert space $L^2(\mu)$. We address two questions, one of deciding when $L^2(\mu)$ admits a generalized Fourier series expansion, and secondly, in the affirmative, presenting the details for the corresponding Fourier series computations.  The general tool we shall use for this purpose is a disintegration theorem due to Rokhlin. Our use of Rokhlin disintegration (i.e., generalized Bayes rules for conditional measures) serves two purposes.  First, our application of Rokhlin disintegration allows us to give explicit conditions for deciding when $L^2(\mu)$ admits a generalized Fourier series expansion, and second, it allows us to link analysis of the two-dimensional Fourier series to those for the corresponding one-dimensional measures, i.e., the two marginal measures, and the associated family of conditional measures, also called slice-measures.

In our framework for our Fourier expansions, we require the measure to be what we will call \textbf{singular-fibered} that we define using the Rokhlin Disintegration Theorem \cite{Rohlin1949Fundamental,Rohlin1949Decomposition}, which is a generalization of the theorem of Fubini-Tonelli:

\begin{theorem}[Rokhlin Disintegration]
For a Borel probability measure $\mu$ on metric space $A\times B$, there is a unique Borel probability measure $\mu_2$ on $B$, namely, 
$\mu_2=\mu \circ \pi_{B}^{-1}$ where $\pi_{B}: A\times B\to B$ is the projection onto $B$,
and a $\mu_2$-almost-everywhere uniquely determined family of Borel probability measures $\{\rho^{b}\}_{b\in B}$ on $A$ such that the following hold:
\begin{enumerate}
    \item If $f\in L^{1}(\mu)$, then for $\mu_2$-almost-every $b$, $f(a,b)\in L^{1}(\rho^{b})$,\\ and $\int_{A}f(a,b)\,d\rho^{b}\in L^{1}(\mu_2).$
    \item For each $f\in L^{1}(\mu)$,
   $$\int_{A\times B}f\,d\mu=\int_{B}\int_{A}f(a,b)\,d\rho^{b}\,d\mu_2.$$
\end{enumerate}
\end{theorem}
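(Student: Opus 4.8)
The plan is to construct the conditional family $\{\rho^b\}$ as a regular conditional probability and then verify the two integral identities by a standard ascent through function classes. The marginal $\mu_2 = \mu \circ \pi_B^{-1}$ is forced by the statement, so there is nothing to build there; the content lies in producing a genuinely measure-valued assignment $b \mapsto \rho^b$. First I would fix a Borel set $E \subseteq A$ and observe that $F \mapsto \mu(E \times F)$ is a finite Borel measure on $B$ absolutely continuous with respect to $\mu_2$, since $\mu_2(F) = 0$ forces $\mu(A \times F) = 0$ and hence $\mu(E \times F) = 0$. The Radon--Nikodym theorem then supplies a $\mu_2$-a.e. determined density $g_E = d\mu(E \times \cdot)/d\mu_2$ satisfying $\int_F g_E \, d\mu_2 = \mu(E \times F)$ for every Borel $F \subseteq B$. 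The intended meaning is $g_E(b) = \rho^b(E)$, so the remaining task is to choose the densities $g_E$ coherently in $E$ so that, for $\mu_2$-a.e. $b$, the set function $E \mapsto g_E(b)$ is a Borel probability measure on $A$.

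The obstacle, and the heart of the theorem, is that each $g_E$ is defined only up to a $\mu_2$-null set, whereas countable additivity must hold simultaneously across all disjoint sequences, a priori an uncountable family of constraints. To control this I would use that a Borel probability measure on a metric space may be handled in a standard Borel setting (restricting to a separable support of full measure), so that $A$ is Borel isomorphic to a Borel subset of $\mathbb{R}$; transporting the problem, it suffices to disintegrate when $A \subseteq \mathbb{R}$. There I would define, for each rational $q$, the density $G_q = d\mu((-\infty, q] \times \cdot)/d\mu_2$, and exploit that only countably many null sets arise: off their (null) union, $q \mapsto G_q(b)$ is monotone nondecreasing, right-limitable, with the correct limits $0$ and $1$ at $\mp\infty$. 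For every such $b$ this extends uniquely to a cumulative distribution function, and Lebesgue--Stieltjes yields a Borel probability measure $\rho^b$ on $A$ with $\rho^b((-\infty,q]) = G_q(b)$. Transporting back through the Borel isomorphism produces the family $\{\rho^b\}$.

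With the family in hand I would verify identity (2) by the usual hierarchy. By construction $\mu(E \times F) = \int_F \rho^b(E)\, d\mu_2$ holds for measurable rectangles, which form a $\pi$-system generating the product $\sigma$-algebra, so a Dynkin (monotone class) argument promotes it to $\mathbf{1}_S$ for every Borel $S \subseteq A \times B$; linearity extends it to simple functions, monotone convergence to nonnegative Borel $f$, and the splitting $f = f^+ - f^-$ to general $f \in L^1(\mu)$. Applying the nonnegative case to $|f|$ simultaneously yields statement (1): the inner integral $\int_A |f(a,b)|\, d\rho^b$ is finite for $\mu_2$-a.e. $b$ and defines a function in $L^1(\mu_2)$, which is precisely the integrability and measurability asserted there. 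Finally, for uniqueness I would note that if $\{\tilde\rho^b\}$ is another such family, then $\int_F \rho^b(E)\, d\mu_2 = \mu(E \times F) = \int_F \tilde\rho^b(E)\, d\mu_2$ for all Borel $F$, forcing $\rho^b(E) = \tilde\rho^b(E)$ for $\mu_2$-a.e. $b$; letting $E$ range over a countable generating algebra for the Borel sets of $A$ and intersecting the resulting null sets, the two families agree on a generating $\pi$-system for $\mu_2$-a.e. $b$, hence $\rho^b = \tilde\rho^b$ for $\mu_2$-a.e. $b$.
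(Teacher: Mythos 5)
The paper offers no proof of this statement; it is quoted as a classical theorem of Rokhlin and then applied only to measures on $[0,1)^2$. So your proposal has to be judged on its own terms. Its architecture --- Radon--Nikodym densities $g_E = d\mu(E\times\cdot)/d\mu_2$, the countable family $G_q$ indexed by rationals, Lebesgue--Stieltjes reconstruction of $\rho^b$ off a single $\mu_2$-null set, a $\pi$-$\lambda$ ascent through indicators, simple functions, and monotone limits, statement (1) extracted by applying the nonnegative case to $\abs{f}$, and uniqueness from a countable generating $\pi$-system --- is the standard construction of a regular conditional distribution, and it is correct in the setting the paper actually needs, namely standard Borel (e.g.\ Polish) spaces, which covers the unit square.

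The genuine gap is the reduction step: ``a Borel probability measure on a metric space may be handled in a standard Borel setting (restricting to a separable support of full measure), so that $A$ is Borel isomorphic to a Borel subset of $\mathbb{R}$.'' Neither half of that sentence is available at the stated level of generality. First, a Borel probability measure on an arbitrary metric space need not concentrate on a separable subset; this is a set-theoretic matter (it holds when the cardinality of the space is not real-valued measurable, but not in ZFC for arbitrary metric spaces). Second, and more seriously, separability does not give standardness: a separable metric space embeds in a Polish space, but its image can be non-Borel there, and such a space is \emph{not} Borel isomorphic to a Borel subset of $\mathbb{R}$ --- the Borel isomorphism theorem you are invoking is a theorem about standard Borel spaces. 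To push $\mu$ into the completion and pull the conditional measures back to $A$ you would need tightness/Radon regularity or an outer-measure argument you have not supplied; in fact the theorem as stated for bare metric spaces is essentially Rokhlin's theorem for Lebesgue spaces and cannot be proved without some such hypothesis. The honest fix is to state and prove the theorem for Polish (or standard Borel) $A$ and $B$, where your argument goes through and which is all the paper uses. Two smaller glosses to patch in that setting: (i) your construction gives $\mu(E\times F)=\int_F\rho^b(E)\,d\mu_2$ by fiat only for $E=(-\infty,q]$, so the Dynkin argument must run first in the variable $E$ (half-lines are a generating $\pi$-system, and measurability of $b\mapsto\rho^b(E)$ must be carried along inside the $\lambda$-system) and only then in $S\subseteq A\times B$; (ii) to conclude $\rho^b((-\infty,q])=G_q(b)$ after the Lebesgue--Stieltjes step you need right-continuity of $q'\mapsto G_{q'}(b)$ at each rational $q$, which costs one more countable family of null sets (continuity from above of $\mu$ plus monotone convergence), not merely ``right-limitability.''
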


\begin{definition} \label{D:slicesingular}
For a Borel probability measure $\mu$ on $[0,1)^{2}$ by the Rokhlin Disintegration Theorem, there is a Borel probability measure $\mu_{2}$ on $[0,1)$ that we will call the $y$-\textbf{marginal measure} and a family of Borel probability measures $\{\rho^{y}\}$ on $[0,1)$ indexed by $[0,1)$ that we will call the \textbf{slice measures} of $\mu$ such that the conclusions of Rokhlin's theorem hold, namely $\mu(dx\,dy)=\rho^y(dx)\,\mu_2(dy)$. We can also apply the Rokhlin Disintegration Theorem in the other direction and obtain $\mu(dx\,dy)=\rho_x(dy)\,\mu_1(dx)$. 
\end{definition}

These slice measures $\rho_x$ or $\rho^y$ constitute the \textit{fibers} of $\mu$ from a geometric perspective, or from the perspective of probability theory are conditional probability distributions. Conditional distributions and disintegrations are diversely studied in the literature; for example: \cite{Rokh47,Sim12,ChaPol97}.

In \cite{HerWeb2017}, the following result was proved:

\begin{theorem}[Herr \& Weber, 2017]\label{Thm-2016}If $\mu$ is a singular Borel probability measure on $[0,1)$, then any element $f\in L^2(\mu)$ possesses a Fourier series $$f(x)=\sum_{n=0}^{\infty}c_ne^{2\pi inx},$$ where the sum converges in norm.
\end{theorem}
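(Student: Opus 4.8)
The plan is to deduce the theorem from the classical F.\ and M.\ Riesz theorem together with the singularity hypothesis, and then to promote the resulting completeness statement to a genuine norm-convergent series by recognizing the exponentials as a stationary sequence amenable to the Kaczmarz algorithm. Throughout I identify $[0,1)$ with the circle $\mathbb{T}$.

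The first and central step is to show that $\set{e^{2\pi inx}}_{n\ge 0}$ is complete in $L^2(\mu)$, i.e.\ that its closed linear span is all of $L^2(\mu)$. By the usual Hilbert-space duality this is equivalent to showing that the only $g\in L^2(\mu)$ with $\ip{g,e^{2\pi inx}}_{L^2(\mu)}=0$ for every $n\ge 0$ is $g=0$. Given such a $g$, note that $L^2(\mu)\subseteq L^1(\mu)$ because $\mu$ is a probability measure, so $\nu:=g\,d\mu$ is a finite complex Borel measure on $\mathbb{T}$. The orthogonality relations say exactly that the Fourier--Stieltjes coefficients $\hat\nu(n)=\int e^{-2\pi inx}\,d\nu$ vanish for all $n\ge 0$, and in particular for every strictly positive frequency.

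Here the F.\ and M.\ Riesz theorem applies: a finite complex Borel measure on $\mathbb{T}$ whose Fourier coefficients vanish on an entire half-line of frequencies is absolutely continuous with respect to Lebesgue measure $\lambda$. Hence $\nu\ll\lambda$. On the other hand $\nu\ll\mu$ by construction, and $\mu\perp\lambda$ by hypothesis, so $\nu\perp\lambda$ as well; a measure that is simultaneously absolutely continuous and singular with respect to $\lambda$ is the zero measure. Thus $\nu=0$, i.e.\ $g=0$ in $L^2(\mu)$, and completeness follows. I expect this to be the main substantive point: the conclusion rests entirely on the singularity $\mu\perp\lambda$ (for $\mu=\lambda$ the one-sided exponentials are far from complete), and it is the F.\ and M.\ Riesz theorem that converts singularity into the vanishing of the orthogonal complement.

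It remains to upgrade completeness, which only gives density of finite exponential combinations, into an honest series $f=\sum_{n=0}^\infty c_ne^{2\pi inx}$ with fixed coefficients converging in $\norm{\cdot}_{L^2(\mu)}$. For this I would exploit that the system is \emph{stationary}: since $\ip{e^{2\pi inx},e^{2\pi imx}}_{L^2(\mu)}=\hat\mu(m-n)$ depends only on $n-m$, the exponentials form a stationary sequence whose correlation sequence is the moment sequence of $\mu$. The Kaczmarz reconstruction algorithm then produces explicit coefficients $c_n=\ip{f,g_n}$ from auxiliary vectors $g_n$ defined by a triangular recursion, and the theory of such sequences (Kwapie\'n--Mycielski) characterizes when the algorithm reconstructs every $f$ in norm via a Szeg\H{o}-type condition on the absolutely continuous part of $\mu$. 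Because $\mu$ is singular, that part vanishes and the condition is automatic, yielding the desired norm-convergent series. The delicate point in this second stage is precisely the convergence analysis of the Kaczmarz iterates, and verifying that singularity forces effectiveness is where the real work lies.
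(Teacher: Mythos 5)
Your proposal is correct and follows essentially the same route as the proof in the cited Herr--Weber paper (the present paper only quotes the result without proof): completeness of $\set{e^{2\pi inx}}_{n\geq0}$ in $L^2(\mu)$ is obtained from the F.\ and M.\ Riesz theorem played against the singularity $\mu\perp\lambda$, and the norm-convergent series is then produced by the Kaczmarz algorithm for stationary sequences, whose effectiveness is automatic for singular spectral measure by the Kwapie\'n--Mycielski criterion. The auxiliary vectors $g_n$ in your second stage are precisely the Parseval frame referred to immediately after the theorem statement, so the two arguments coincide in both structure and substance.
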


The authors also showed that there exists a Parseval frame $\set{g_n}_{n=0}^{\infty}$ in $L^2(\mu)$, dependent on $\mu$ alone, such that the coefficients $c_n\in\mathbb{C}$ can be computed by $c_n=\ip{f,g_n}_{\mu}$. 

We recall the definition of a frame here:
\begin{definition}
A sequence of vectors $\{f_{n}\}_{n=0}^{\infty}$ in a Hilbert space $H$ is called a \textbf{frame} if there exist $A,B>0$ such that
$$A||f||^{2}\leq \sum_{n=0}^{\infty}|\langle f, f_{n}\rangle|^{2}\leq B||f||^{2}$$ for all $f\in H$.
\end{definition}

A measure $\mu$ on $[0,1)$ is said to be \textbf{spectral} if there exists a set $P\subseteq\mathbb{Z}$ such that the collection $\set{e^{2\pi inx}}_{n\in P}$ is an orthogonal basis in $L^2(\mu)$. For example, in the oft-cited \cite{JorPed98}, Jorgensen and Pedersen demonstrated that the quaternary Cantor measure on $[0,1)$ is spectral with spectrum $P=\set{0,1,4,5,16,17,20,21,\ldots}$. They also showed that the ternary Cantor measure is not spectral. In general, most singular measures are of unknown spectrality. Theorem \ref{Thm-2016}, however, works for any singular probability measure $\mu$ whatsoever, albeit at the cost of the fact that the collection $\set{e^{2\pi inx}}_{n\in\mathbb{Z}}$ is not orthogonal in $L^2(\mu)$ nor even a frame. For a discussion of general results and a wider framework for Fourier analysis, self-similarity, and fractals, we refer the reader to \cite{Jor2018}.

It should be noted that the half-open nature of the interval $[0,1)$ above is necessary, so that $\mu$ can be identified with a measure on the 1-dimensional torus. If the measure $\mu$ had atoms distinctly at both $0$ and $1$, then clearly the result could not hold, since each complex exponential function $e^{2\pi inx}$ is $1$-periodic. By saying a measure $\mu$ is \textbf{supported} on a set $A$, we shall mean only that $A$ is measurable and $\mu(A^\mathcal{C})=0$.

In \cite{HerJorWeb2023} the Rokhlin Disintegration Theorem was used to extend the result of Theorem \ref{Thm-2016} to certain measures on $[0,1)^2$ with the property of being slice-singular. 

\begin{definition}[Singular-Fibered and Slice-Singular Measures]
Let $\mu$ be a Borel probability measure on $[0,1]^2$ with Rokhlin disintegrations \begin{equation*}\mu(dx\,dy)=\rho_x(dy)\,\mu_1(dx)=\rho^y(dx)\,\mu_2(dy).\end{equation*} $\mu$ is said to be $x$-\textbf{singular-fibered} if $\rho_x$ is singular for $\mu_1$-almost-every $x$. If $\mu$ is $x$-singular-fibered and in addition $\mu_1$ is singular, then $\mu$ is said to be \textbf{$x$-slice-singular}. There is a symmetric definition for $y$-singular-fibered and $y$-slice-singular measures. A measure that is slice-singular in both directions is called \textbf{bi-slice-singular}. 
\end{definition}

If a measure $\mu$ on $[0,1]^2$ is slice-singular, then it is also singular. The converse, however, is false. For an easy counterexample, one may simply take the line $y=\frac{1}{2}$ with normalized Lebesgue measure imposed upon it. The resulting measure $\mu$ on the unit square is then singular, but in the $y$ direction the slice $\rho^{1/2}$ is not singular, and in the $x$ direction the marginal $\mu_1$ is Lebesgue. Thus, slice-singular measures constitute a proper subset of the set of singular Borel probability measures on $[0,1]^2$. In \cite{HerJorWeb2023} their special properties were used to obtain the following result:

\begin{theorem}[Herr, Jorgensen, \& Weber, 2023]\label{Thm-23}Suppose $\mu$ is a $y$-slice-singular Borel probability measure on $[0,1)^2$. For any $f\in L^2(\mu)$, $f$ possesses a Fourier series expansion of the form \begin{align}\label{SingMarg}f(x,y)=\sum_{n=0}^{\infty}\sum_{m=0}^{\infty}d_{nm}e^{2\pi i(nx+my)}.\end{align} The series converges conditionally in norm.
\end{theorem}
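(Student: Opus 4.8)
The plan is to iterate the one-dimensional expansion of Theorem~\ref{Thm-2016} across the two coordinate directions, using the Rokhlin disintegration $\mu(dx\,dy)=\rho^y(dx)\,\mu_2(dy)$ to reassemble the one-dimensional pieces inside the direct integral decomposition $L^2(\mu)\cong\int_{[0,1)}^{\oplus}L^2(\rho^y)\,d\mu_2(y)$, in which $\norm{f}_\mu^2=\int_{[0,1)}\norm{f(\cdot,y)}_{\rho^y}^2\,d\mu_2(y)$. First I would build the inner expansion in the $x$-variable. Since $\mu$ is $y$-slice-singular, the slice $\rho^y$ is singular for $\mu_2$-almost-every $y$, and by Rokhlin's theorem $f(\cdot,y)\in L^2(\rho^y)$ for almost every $y$. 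For each such $y$, Theorem~\ref{Thm-2016} supplies coefficients $c_n(y)=\ip{f(\cdot,y),g_n^{y}}_{\rho^y}$, where $\set{g_n^{y}}$ is the $\rho^y$-dependent Parseval frame, so that $f(x,y)=\sum_{n\ge 0}c_n(y)e^{2\pi inx}$ with convergence in $L^2(\rho^y)$.

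Next I would verify that each $c_n$ is a genuine element of $L^2(\mu_2)$. Measurability in $y$ should follow from the explicit Kaczmarz-type construction of the frame $\set{g_n^{y}}$ out of the exponentials and the Gram data $\ip{e^{2\pi inx},e^{2\pi ikx}}_{\rho^y}=\int e^{2\pi i(n-k)x}\,d\rho^y$, since $y\mapsto\int e^{2\pi ijx}\,d\rho^y$ is measurable by the Rokhlin conclusions and the $g_n^y$ are defined by finite recursion from these quantities. Integrability is then immediate from the Parseval identity $\sum_n\abs{c_n(y)}^2=\norm{f(\cdot,y)}_{\rho^y}^2$, which integrates against $\mu_2$ to $\sum_n\norm{c_n}_{\mu_2}^2=\norm{f}_\mu^2<\infty$. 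At this point the singularity of the marginal $\mu_2$ (the hypothesis that distinguishes slice-singular from merely singular-fibered) lets me apply Theorem~\ref{Thm-2016} a second time, now in $y$, to each $c_n\in L^2(\mu_2)$, obtaining $c_n(y)=\sum_{m\ge 0}d_{nm}e^{2\pi imy}$ in $L^2(\mu_2)$.

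To assemble the double series I would exploit that, because each $\rho^y$ is a probability measure and $\abs{e^{2\pi inx}}=1$, the map $g\mapsto g(y)e^{2\pi inx}$ is an isometry of $L^2(\mu_2)$ into $L^2(\mu)$: indeed $\norm{g(y)e^{2\pi inx}}_\mu^2=\int\p{\int\abs{g(y)}^2\,d\rho^y}d\mu_2=\norm{g}_{\mu_2}^2$. Hence, for fixed $N$, the inner ($m$) sums converge and $\sum_{n=0}^N\p{\sum_{m=0}^M d_{nm}e^{2\pi imy}}e^{2\pi inx}\to\sum_{n=0}^N c_n(y)e^{2\pi inx}=:S_N$ in $L^2(\mu)$ as $M\to\infty$, being a finite sum of images of convergent sequences under an isometry.

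The remaining, and I expect hardest, step is to pass from the almost-everywhere convergence $S_N(\cdot,y)\to f(\cdot,y)$ in $L^2(\rho^y)$ to norm convergence $S_N\to f$ in $L^2(\mu)$; this requires a $\mu_2$-integrable domination of $y\mapsto\norm{S_N(\cdot,y)-f(\cdot,y)}_{\rho^y}^2$ that is uniform in $N$. Such a bound is precisely what the Kaczmarz algorithm underlying Theorem~\ref{Thm-2016} provides: its iterates satisfy $f-S_N=(I-P_{\varphi_N})(f-S_{N-1})$ with $\varphi_N=e^{2\pi iNx}$ of unit $\rho^y$-norm, so each residual is an orthogonal projection of the previous one, the norms $\norm{f(\cdot,y)-S_N(\cdot,y)}_{\rho^y}$ are nonincreasing in $N$, and therefore $\norm{S_N(\cdot,y)-f(\cdot,y)}_{\rho^y}^2\le\norm{f(\cdot,y)}_{\rho^y}^2\in L^1(\mu_2)$. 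Dominated convergence in the direct integral then forces $\norm{S_N-f}_\mu\to 0$, and combining this with the previous paragraph yields $\lim_N\lim_M\sum_{n=0}^N\sum_{m=0}^M d_{nm}e^{2\pi i(nx+my)}=f$ in $L^2(\mu)$, which is the order-dependent (conditional) norm convergence asserted.
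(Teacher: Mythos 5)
Your proposal is correct, and it shares the overall architecture of the proof this paper relies on from \cite{HerJorWeb2023}: expand slicewise in $x$ via the one-dimensional theorem to get coefficient functions of $y$, expand those in $y$, and reassemble through the isometric embedding $g(y)\mapsto e^{2\pi inx}g(y)$ of $L^2(\mu_2)$ into $L^2(\mu)$. In the paper's notation your $c_n$ is exactly $G_nf$, your partial sum $S_N$ is $\sum_{n=0}^{N}R_n^\ast G_nf$, and your isometry is $R_n^\ast$. Where you genuinely diverge is in the key step $S_N\to f$ in $L^2(\mu)$: the paper obtains this by citing the operator-level Kaczmarz machinery of \cite{HerJorWeb2023} (Theorem 9.3 there, that $\set{R_n}$ is an ``effective sequence of operators,'' so $\sum_n R_n^\ast G_n\to I$ in the strong operator topology), whereas you prove it directly by combining slicewise effectiveness with the monotonicity of Kaczmarz residuals, $\norm{f(\cdot,y)-S_N(\cdot,y)}_{\rho^y}\leq\norm{f(\cdot,y)}_{\rho^y}$, and dominated convergence in the direct integral. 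Your route is more elementary and self-contained (only the scalar Theorem \ref{Thm-2016} and Rokhlin are needed), and it correctly identifies the one place where a uniform-in-$N$, $\mu_2$-integrable domination is indispensable; the paper's route buys a reusable abstract framework, which is what allows the present paper to swap the singular marginal for a Lebesgue-equivalent one in Theorem \ref{LebMargResult} with almost no changes. Two caveats, both of which you handle but which deserve emphasis: your domination step uses the internal structure of the proof of Theorem \ref{Thm-2016} (that the partial sums of the Herr--Weber series are precisely the Kaczmarz iterates, giving $f-S_N=(I-P_{\varphi_N})(f-S_{N-1})$), not merely its statement, since the bare statement yields pointwise-in-$y$ convergence but no integrable uniform bound; and the joint measurability of $y\mapsto c_n(y)$, which you argue from the finite recursion defining $g_n^{(y)}$ out of the moments $\int e^{2\pi ikx}\,\rho^y(dx)$, is exactly the content of Proposition 9.2 of \cite{HerJorWeb2023} and cannot be skipped, as without it neither the second expansion nor the dominated convergence argument is meaningful.
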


Again, the authors prescribe how the coefficients can be computed. The theorem applies similarly if the measure is $x$-slice-singular. 

Although formulated so that $\mu$ is supported on $[0,1)^2$, it is easy to see that the result would be equally valid if $\mu$ were supported on any of the sets $[0,1)^2$, $[0,1)\times(0,1]$, $(0,1]\times(0,1]$, or $(0,1]^2$, or in some cases, if each individual slice measure and the marginal measure were supported on various half-open intervals. To strike a balance between simplicity and flexibility, we will frame most of our results for $[0,1)^2$. The reader shall be alert to how certain results can be easily generalized to more situations.

In Lemma 2 of \cite{HerJorWeb2023} and its proof, the following simple facts were established, the truth of which the reader will easily see:
\begin{proposition}\label{prop-simple}Let $\mu$ be a measure on $[0,1]^2$ and $\mu_1$ and $\mu_2$ its $x$- and $y$-marginal measures. Let $\mu(dx\,dy)=\rho_x(dy)\,d\mu_1(dx)=\rho^y(dx)\,\mu_2(dy)$ be Rokhlin disintegrations of $\mu$. If $\mu_2$ is singular, then $\rho_x$ is singular for $\mu_1$-almost-every $x$. Likewise, if $\mu_1$ is singular, then $\rho^y$ is singular for $\mu_2$-almost-every $y$. Consequently, if both $\mu_1$ and $\mu_2$ are singular, then $\mu$ is bi-slice-singular.
\end{proposition}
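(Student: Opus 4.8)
The plan is to prove the two one-directional implications first, each by the same argument with the roles of the two coordinates exchanged, and then to read off the bi-slice-singular conclusion by combining them. The single idea behind everything is to express each marginal as a $\mu$-average of the corresponding family of slice measures; this averaging formula falls immediately out of the Rokhlin disintegration, and after that the singularity bookkeeping is purely measure-theoretic.

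First I would establish the averaging identity for the first claim. Apply conclusion (2) of the Rokhlin Disintegration Theorem to the function $f(x,y)=\mathbf{1}_E(y)$, where $E\subseteq[0,1]$ is an arbitrary Borel set. Since $f$ depends on $y$ alone, the inner integral against $\rho_x$ equals $\rho_x(E)$, while the left-hand side is $\mu([0,1]\times E)=\mu_2(E)$ by the definition of the $y$-marginal. This gives
$$\mu_2(E)=\int_{[0,1]}\rho_x(E)\,d\mu_1(x)$$
for every Borel $E$; in words, $\mu_2$ is the $\mu_1$-average of the slice measures $\rho_x$.

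Next I would exploit singularity. Assume $\mu_2$ is singular with respect to Lebesgue measure $\lambda$, so there is a Borel set $N\subseteq[0,1]$ with $\lambda(N)=0$ and $\mu_2(N^\mathcal{C})=0$. Substituting $E=N^\mathcal{C}$ into the averaging identity yields $\int_{[0,1]}\rho_x(N^\mathcal{C})\,d\mu_1(x)=0$. As the integrand is nonnegative, $\rho_x(N^\mathcal{C})=0$ for $\mu_1$-almost-every $x$, so each such $\rho_x$ is concentrated on the $\lambda$-null set $N$ and is therefore singular. This proves the first statement, and the second follows verbatim after interchanging $x\leftrightarrow y$, $\mu_1\leftrightarrow\mu_2$, and $\rho_x\leftrightarrow\rho^y$. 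For the consequence, if both marginals are singular, then the first statement makes $\mu$ $x$-singular-fibered, which together with singularity of $\mu_1$ gives $x$-slice-singularity, while the second statement symmetrically gives $y$-slice-singularity; hence $\mu$ is bi-slice-singular.

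There is no obstacle of real substance—the proposition is flagged as elementary—so the only point deserving care is the measurability and integrability needed to justify the averaging identity and the passage to an almost-everywhere conclusion. These are supplied precisely by conclusion (1) of Rokhlin's theorem, which guarantees that $x\mapsto\int_{[0,1]}\mathbf{1}_E(y)\,d\rho_x=\rho_x(E)$ is $\mu_1$-integrable; this legitimizes the displayed identity and licenses the inference that a nonnegative integrand with zero integral vanishes $\mu_1$-almost-everywhere.
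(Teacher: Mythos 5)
Your proof is correct and is essentially the argument the paper has in mind: the paper itself omits a proof of Proposition \ref{prop-simple}, deferring to Lemma 2 of the cited 2023 work, and the averaging identity $\mu_2(E)=\int\rho_x(E)\,d\mu_1(x)$ together with ``a nonnegative integrand with zero integral vanishes almost everywhere'' is exactly the mechanism used there (and reappears in this paper's proof of Theorem \ref{SingWithLebMarg}, phrased as a contradiction rather than directly). No gaps; the measurability appeal to conclusion (1) of Rokhlin's theorem is the right justification.
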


This result thus identifies a simple condition guaranteeing a measure is slice-singular: If both marginals are singular, the measure is slice-singular in both directions and hence will admit Fourier series as in $\eqref{SingMarg}$. Of course, a measure need not be bi-slice-singular to be slice-singular. Our goal in this paper is to establish that a much larger class of measures admit Fourier series.

Throughout the paper, we shall prove the singularity of measures by determining the Hausdorff dimension of their support. A key technical step in our analysis is concerned with determining the Hausdorff dimensions of the intersections of lines with special planar sets. While there is an extensive literature dealing with this question, for our present purposes we shall make use of a classical approach known as the Marstrand Slicing Theorem. The theorem, while taking different forms in the literature, refers to a central result in fractal geometry, essentially stating that, for a planar set with Hausdorff dimension strictly greater than one, almost all lines intersecting the set (by slicing) will do so in a set of dimension one less than the original set's dimension, meaning that the ``slices" of the set formed by lines have a dimension one lower; this theorem is primarily attributed to John Marstrand \cite{Mar54}, and it is often discussed in the wider context of Kenneth Falconer's work on fractal geometry and projections. In this paper, we shall make use of a more general variant that appears in some lectures by Hochman \cite{Hoch14} and a book by Bishop and Peres \cite{BisPer2017}. It fits our present needs, but we also wish to stress that there is an extensive literature dealing with the Hausdorff dimension of slices or projections of planar sets. For the reader's benefit we call attention to the following small sample: \cite{FalFraJin15,BarFerSim12,FalMat16,Aritro24,FalFraShm21,BurFalFra21}.

An unfamiliar reader may find the Hausdorff dimension defined and its essential properties noted in \cite{Hoch14}, \cite{BisPer2017}, \cite{Falc03}, and in innumerable other texts. We shall denote the Hausdorff dimension of a set $X$ by $\dim_{H}(X)$. The basic idea is that a set that has some fractal nature to it may be designated some non-integral dimension. Of special importance is the fact that if $\dim_{H}(X)<n$, then $\lambda^n(X)=0$, where $\lambda^n$ is the $n$-dimensional Lebesgue measure. The converse does not hold: There are sets for which $\lambda^n(X)=0$ and yet $\dim_{H}(X)=n$. There are many alternatives to the Hausdorff dimension, such as the Minkowski/box dimension (c.f. \cite{Falc03,BisPer2017,FalFraKae23,Olsen19}) and the packing dimension (c.f. \cite{BisPer2017,Olsen15}), but perhaps none rise to the same level of popularity.

Our paper is organized as follows: We first expand Theorem \ref{Thm-23} to the case where the marginal measure is similar to Lebesgue measure instead of singular, which will still admit Fourier series, albeit with a bi-infinite inner sum. We then show that a large class of measures satisfy the sufficient conditions for having such series. Afterward, we return to the question of when a measure is slice-singular, since  such measures have Fourier series which, not involving a bi-infinite summation and additional terms, are more desirable. Section 3 is devoted to grid IFS measures.

\section{Main Results}

Although Theorem \ref{Thm-23} is formulated in terms of slice-singularity, the spectrality of the Lebesgue measure means the result can be expanded easily to the case where the marginal measure is similar to Lebesgue measure. Our first main result illustrates that there are more singular-fibered measures possessing these types of Fourier expansions than just the slice-singular ones. Before we prove this result, we make our definition of similar-to-Lebesgue-measure precise:
\begin{definition}

Let $\mu$ be an absolutely continuous finite Borel measure on $[0,1)$ with Radon-Nikodym derivative $h$. We say $\mu$ is \textbf{essentially equivalent to Lebesgue measure} if $h,\frac{1}{h}\in L^{\infty}(\mu)$.
\end{definition}

\begin{theorem}\label{LebMargResult}Let $\mu$ be a Borel probability measure on $[0,1)^2$, and let $\mu(dx\,dy)=\rho^y(dx)\,\mu_2(dy)$ be a Rokhlin disintegration of $\mu$. Suppose $\mu$ is y-singular-fibered, and $\mu_2$ is essentially equivalent to Lebesgue measure. For any $f\in L^2(\mu)$, $f$ possesses a Fourier series expansion of the form \begin{align}\label{LebMarg}f(x,y)=\sum_{n=0}^{\infty}\sum_{m=-\infty}^{\infty}d_{nm}e^{2\pi i(nx+my)}
\end{align}
The sum converges conditionally in the norm of $L^2(\mu).$

Furthermore for each $n,m$, the mapping of $f\mapsto d_{nm}$ is a continuous linear functional, and the mapping
    $f\mapsto \{d_{nm}\}$ into $\ell^{2}(\mathbb{N}_0\times \mathbb{Z})$ has a bounded left inverse.
\end{theorem}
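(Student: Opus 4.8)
\emph{The plan} is to split the problem along the two coordinate directions, handling the $x$-fibers with Theorem \ref{Thm-2016} and the $y$-marginal with the spectral structure of Lebesgue measure, and then to reassemble through the Rokhlin direct-integral decomposition $L^2(\mu)=\int^{\oplus}_{[0,1)}L^2(\rho^y)\,d\mu_2(y)$, under which $\norm{f}_{L^2(\mu)}^2=\int_{[0,1)}\norm{f(\cdot,y)}_{L^2(\rho^y)}^2\,d\mu_2(y)$. Since $\mu$ is $y$-singular-fibered, $\rho^y$ is singular for $\mu_2$-a.e.\ $y$, so Theorem \ref{Thm-2016} applies on each fiber: there is a Parseval frame $\set{g_n^y}_{n\ge 0}$ in $L^2(\rho^y)$ with $f(\cdot,y)=\sum_{n\ge 0}c_n(y)\,e^{2\pi in\cdot}$, where $c_n(y)=\ip{f(\cdot,y),g_n^y}_{\rho^y}$. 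Because the fiber frames are built from the exponentials through a fixed recursive procedure driven by the Fourier coefficients $\widehat{\rho^y}(k)$, which are $\mu_2$-measurable in $y$ by Rokhlin's theorem, each $y\mapsto c_n(y)$ is measurable; the Parseval identity $\sum_n\abs{c_n(y)}^2=\norm{f(\cdot,y)}_{L^2(\rho^y)}^2$ then integrates to $\sum_n\norm{c_n}_{L^2(\mu_2)}^2=\norm{f}_{L^2(\mu)}^2$, so each $c_n\in L^2(\mu_2)$.

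For the $y$-direction, the hypothesis $d\mu_2=h\,d\lambda$ with $h,1/h\in L^\infty$ makes $L^2(\mu_2)$ and $L^2(\lambda)$ equal as vector spaces with equivalent norms, so the Lebesgue orthonormal basis $\set{e^{2\pi imy}}_{m\in\mathbb{Z}}$ is a Riesz basis of $L^2(\mu_2)$ with bounds controlled by $\norm{1/h}_\infty^{-1}$ and $\norm{h}_\infty$. Expanding each coefficient function, $c_n(y)=\sum_{m\in\mathbb{Z}}d_{nm}\,e^{2\pi imy}$ in $L^2(\mu_2)$, the Riesz bounds give $\sum_m\abs{d_{nm}}^2\asymp\norm{c_n}_{L^2(\mu_2)}^2$ uniformly in $n$; summing over $n$ yields $A\norm{f}^2\le\sum_{n,m}\abs{d_{nm}}^2\le B\norm{f}^2$. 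This estimate is exactly the ``furthermore'' assertion in disguise: each functional $f\mapsto d_{nm}$ is dominated by $\sqrt{B}\,\norm{f}$ and hence continuous, while the lower bound says the analysis operator $T\colon f\mapsto\set{d_{nm}}$ into $\ell^2(\mathbb{N}_0\times\mathbb{Z})$ is bounded below. Being bounded and bounded below, $T$ is injective with closed range, $T^*T\ge A\cdot I$ is invertible, and the Moore--Penrose pseudoinverse $(T^*T)^{-1}T^*$ is the required bounded left inverse.

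It remains to prove that the double series reconstructs $f$ in the prescribed iterated order. Writing $T_N(x,y)=\sum_{n=0}^{N}c_n(y)\,e^{2\pi inx}$, I would first send the inner index to infinity for fixed $N$: only finitely many $n$ occur, each $\sum_{\abs{m}\le M}d_{nm}e^{2\pi imy}\to c_n$ in $L^2(\mu_2)$, and the triangle inequality together with $\norm{e^{2\pi in\cdot}}_{L^2(\rho^y)}=1$ then forces the $M$-truncations to converge to $T_N$ in $L^2(\mu)$. The outer limit $T_N\to f$ is the delicate step, since $T_N(\cdot,y)=\sum_{n\le N}c_n(y)e^{2\pi in\cdot}$ is the $N$-th partial sum of a merely conditionally convergent fiber series, so an interchange of the direct integral with $\lim_N$ must be justified.

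This interchange is where I expect the real work to lie, and the route I would take is to open up the mechanism behind Theorem \ref{Thm-2016}: the fiber reconstruction is produced by the Kaczmarz algorithm, in which the partial sums obey $S_N^y f=S_{N-1}^y f+\ip{f-S_{N-1}^y f,\,e^{2\pi iN\cdot}}_{\rho^y}\,e^{2\pi iN\cdot}$, so that the residual $f(\cdot,y)-S_N^y f(\cdot,y)$ is obtained from $f(\cdot,y)-S_{N-1}^y f(\cdot,y)$ by an orthogonal projection onto the complement of a unit vector. Consequently $\norm{f(\cdot,y)-S_N^y f(\cdot,y)}_{L^2(\rho^y)}$ is non-increasing in $N$ and bounded by $\norm{f(\cdot,y)}_{L^2(\rho^y)}$, which is $\mu_2$-integrable. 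Dominated convergence in the direct integral then gives $\norm{f-T_N}_{L^2(\mu)}^2=\int\norm{f(\cdot,y)-S_N^y f(\cdot,y)}_{L^2(\rho^y)}^2\,d\mu_2(y)\to 0$, completing the conditional reconstruction. The remaining points—measurable selection of the fiber frames and the identity $T_N=S_N^y f$ holding $\mu_2$-a.e.—both follow from the measurable dependence of $\rho^y$ on $y$ furnished by Rokhlin's theorem.
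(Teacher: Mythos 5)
Your proposal is correct, and at its core it is the same proof as the paper's: the objects you construct are literally identical to the paper's --- your $c_n(y)=\ip{f(\cdot,y),g_n^y}_{\rho^y}$ is the paper's auxiliary operator $[G_nf](y)$, your $d_{nm}=\ip{c_n,e_m/h}_{\mu_2}$ is the paper's $d_{nm}=\ip{f,g_n^{(y)}e_m(y)/h(y)}_{\mu}$, and your two-sided bound $A\norm{f}^2\leq\sum_{n,m}\abs{d_{nm}}^2\leq B\norm{f}^2$ is obtained exactly as in the paper (fiber Parseval identity combined with the frame bounds of $\set{e_m/h}$). The difference is one of logistics rather than of mathematical substance: the paper treats Theorem \ref{LebMargResult} as a small perturbation of the proof of Theorem \ref{Thm-23}, citing the internal results of \cite{HerJorWeb2023} --- Theorem 9.3 there (that $\set{R_n}$ is an effective sequence of operators, so $\sum_n R_n^\ast G_n\to I$ strongly), Proposition 9.2 (the formula for $G_n$), and Corollary 9.3 (the integrated Parseval identity) --- and only modifies the expansion of $G_nf$ by using the frame $\set{e_m}_{m\in\mathbb{Z}}$ with dual $\set{e_m/h}$ in place of $\set{e_m}_{m\in\mathbb{N}_0}$. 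You instead re-derive that machinery from scratch: measurability of $y\mapsto g_n^y$ via the Kaczmarz recursion driven by the moments $\widehat{\rho^y}(k)$, the integrated Parseval identity by Tonelli, and, crucially, the strong convergence of the outer sum via monotonicity of the Kaczmarz residuals plus dominated convergence in the direct integral. What your route buys is self-containedness and an explicit left inverse $(T^\ast T)^{-1}T^\ast$, which the paper leaves implicit in the two-sided bound; what it costs is that you must open up the Herr--Weber mechanism, since the residual-monotonicity you invoke is not part of the statement of Theorem \ref{Thm-2016} as quoted but of its proof --- precisely the dependence the paper discharges by citation.
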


Our proof follows the ideas of the proof of Theorem \ref{Thm-23} made in \cite{HerJorWeb2023}, which still works when $\mu_2$ is assumed to be equivalent to Lebesgue measure rather than singular, with only a few very simple adjustments needed. Our discussion relies on a familiarity with that proof, which is far too long to recapitulate here, but which we invite the reader to examine. References to theorems and propositions within our proof of Theorem \ref{LebMargResult} are in the context of \cite{HerJorWeb2023}.

At no point does the proof in \cite{HerJorWeb2023} depend on $\mu_2$ being a singular measure. It requires only that $\mu_2$ be the marginal measure from the Rokhlin Disintegration Theorem for the purpose of ensuring that the operator $$R_n:L^2(\mu)\to L^2(\mu_2):f(x,y)\mapsto\p{y\mapsto\int f(x,y)e^{-2\pi inx}\rho^y(dx)}$$ is well-defined and that the $R_n^\ast$'s are isometries. If $\mu_2$ is equivalent to Lebesgue measure rather than singular, then $\set{e^{2\pi imy}}_{m\in\mathbb{N}_0}$ will not be complete in $L^2(\mu_2)$, but $\set{e^{2\pi imy}}_{m\in\mathbb{Z}}$ will be a basis. We still will have that $R_n^\ast R_n=P_n$ is the orthogonal projection onto \begin{align*}\mathcal{M}_n=\set{e^{2\pi inx}g(y):g\in L^2(\mu_2)},\end{align*}
but this time it is seen that \begin{align*}\mathcal{M}_n=\overline{span}\set{e^{2\pi i(nx+my)}:m\in\mathbb{Z}},\end{align*}
where $\mathbb{N}_0$ has been replaced by $\mathbb{Z}$. 

\begin{proof}[Proof of Theorem \ref{LebMargResult}]

Theorem 9.3 in \cite{HerJorWeb2023} still shows that $\set{R_n}_{n\in\mathbb{N}_0}$ is an effective sequence of operators. Because $\mu_2$ is the marginal measure, we still have that \begin{align*}R_n^\ast:g(y)\mapsto e^{2\pi inx}g(y).
\end{align*} are the adjoint operators. Proposition 9.2 still shows that  \begin{align*}[G_nf](y)=\int f(x,y)\overline{g_n^{(y)}(x)}\rho^y(dx).
\end{align*}
is the auxiliary operator sequence of $\set{R_n}_{n\in\mathbb{N}_0}$, and then $\sum_{n=0}^{\infty}R_n^\ast G_n$ converges to the identity in the strong operator topology.

Now if $\mu_2$ is essentially equivalent to Lebesgue measure, for any $g\in L^2(\mu_2)$, we have that \begin{align*}g=\sum_{m\in\mathbb{Z}}\ip{g,\frac{e_m}{h}}_{\mu_{2}}e_m\end{align*} where $h$ is the Radon-Nikodym derivative of $\mu_{2}$ since $\{e_m\}_{m\in \mathbb{Z}}$ is a frame in $L^{2}(\mu_{2})$.
This convergence occurs in the norm of $L^2(\mu_2)$, but it also occurs in the norm of $L^2(\mu)$ if we regard the functions as being in $L^2(\mu)$, because the map $L^2(\mu_2)\to L^2(\mu)$ given by $g(y)\mapsto\p{(x,y)\mapsto g(y)}$ is an isometry.
Hence, for any $f\in L^2(\mu)$, we have 
\begin{align*}f=&\sum_{n=0}^{\infty}R_n^\ast G_n f\\
=&\sum_{n=0}^{\infty}\p{\sum_{m=-\infty}^{\infty}\ip{[G_nf](y),\frac{e^{2\pi imy}}{h(y)}}_{\mu_2}e^{2\pi imy}}e^{2\pi inx}\\
=&\sum_{n=0}^{\infty}\p{\sum_{m=-\infty}^{\infty}\p{\int f(x,y)\overline{g_n^{(y)}(x)\frac{e^{2\pi imy}}{h(y)}}\,\mu(dx\,dy)}e^{2\pi imy}}e^{2\pi inx}\\
=&\sum_{n=0}^{\infty}\sum_{m=-\infty}^{\infty}\ip{f(x,y),g_n^{(y)}\frac{e_m(y)}{h(y)}}_{\mu}e^{2\pi i(nx+my)}.
\end{align*}
By taking $d_{nm}=\ip{f(x,y),g_n^{(y)}\frac{e_m(y)}{h(y)}}_{\mu}$, the theorem is proved.

Finally, let $f\in L^{2}(\mu)$. By Corollary 9.3 in \cite{HerJorWeb2023}, consider the following:
\begin{equation}\label{pf}
\begin{split}
&\sum_{n}\sum_{m}\left|\ip{f(x,y),g_n^{(y)}\frac{e_m(y)}{h(y)}}_{\mu} \right|^{2} \\
=&\sum_{n}\sum_{m} \left| \left \langle \left \langle f(x,y),g_n^{(y)} \right \rangle_{\rho^{(y)}} ,\frac{e_m(y)}{h(y)} \right \rangle_{\mu_{2}}  \right|^{2}  \\
 \leq& B \sum_{n} \left \|\left \langle f(x,y),g_n^{(y)} \right \rangle_{\rho^{(y)}} \right \|_{\mu_{2}}^{2}\\
 =&B ||f||_{\mu}^{2}
\end{split}
\end{equation}
where $B$ is the upper frame bound for $\left \{ \frac{e_{m}(y)}{h(y)} \right \}$.

A similar calculation shows
$$\sum_{n}\sum_{m}\left|\ip{f(x,y),g_n^{(y)}\frac{e_m(y)}{h(y)}}_{\mu} \right|^{2} \geq A ||f||_{\mu}^{2}$$
where $A$ is the lower frame bound for $\left \{ \frac{e_{m}(y)}{h(y)} \right \}$.
\end{proof}

\begin{theorem}\label{SingWithLebMarg}Suppose $\mu$ is a singular Borel probability measure on $[0,1)^2$ with a marginal measure that is essentially equivalent to Lebesgue measure. Then $L^2(\mu)$ admits Fourier series of form \eqref{LebMarg}.
\end{theorem}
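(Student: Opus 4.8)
The plan is to reduce this statement to Theorem \ref{LebMargResult}, which already delivers the expansion \eqref{LebMarg} once two things are known: that the relevant marginal is essentially equivalent to Lebesgue measure, and that $\mu$ is singular-fibered in the complementary direction. The first is hypothesized, so the whole task is to extract the singular-fibered property from the global singularity of $\mu$. Since one of the two marginals is essentially equivalent to Lebesgue measure, I would assume without loss of generality that it is the $y$-marginal $\mu_2$ (the case of $\mu_1$ being symmetric, swapping the roles of $x$ and $y$ and relabeling the indices in \eqref{LebMarg}). Writing the Rokhlin disintegration $\mu(dx\,dy)=\rho^y(dx)\,\mu_2(dy)$, it then suffices to show that the singularity of $\mu$ forces $\rho^y$ to be singular with respect to Lebesgue measure on $[0,1)$ for $\mu_2$-almost-every $y$, i.e.\ that $\mu$ is $y$-singular-fibered.

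The central device I would use is the comparison measure $\theta:=\lambda\otimes\mu_2$ on $[0,1)^2$, where $\lambda$ denotes Lebesgue measure on the $x$-axis. Because $\mu_2$ is essentially equivalent to Lebesgue measure, its Radon--Nikodym derivative $h$ satisfies $h,1/h\in L^\infty$, so that $\theta(dx\,dy)=h(y)\,\lambda^2(dx\,dy)$ with $h$ bounded above and bounded away from zero; hence $\theta$ and the planar Lebesgue measure $\lambda^2$ are mutually absolutely continuous. Since $\mu$ is singular we have $\mu\perp\lambda^2$, and the mutual absolute continuity of $\theta$ and $\lambda^2$ then yields $\mu\perp\theta$. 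The reason for introducing $\theta$ is that it shares the $y$-marginal $\mu_2$ with $\mu$, while all of its fibers equal $\lambda$; this is exactly what lets me convert a statement about mutual singularity in the plane into a fiberwise statement about the $\rho^y$.

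Concretely, I would choose a Borel set $W\subseteq[0,1)^2$ witnessing $\mu\perp\theta$, so that $\mu(W^{\mathcal C})=0$ and $\theta(W)=0$, and write $W_y=\set{x:(x,y)\in W}$ for the slices. Applying conclusion (2) of the Rokhlin Disintegration Theorem to the indicator $\mathbf 1_{W^{\mathcal C}}$ gives $\int \rho^y(W_y^{\mathcal C})\,\mu_2(dy)=\mu(W^{\mathcal C})=0$, whence $\rho^y(W_y^{\mathcal C})=0$ for $\mu_2$-almost-every $y$; that is, $\rho^y$ is concentrated on $W_y$. On the other hand, the product structure of $\theta$ gives $\int\lambda(W_y)\,\mu_2(dy)=\theta(W)=0$, so $\lambda(W_y)=0$ for $\mu_2$-almost-every $y$. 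For $\mu_2$-almost-every $y$ both conclusions hold at once, so the single slice $W_y$ carries all of $\rho^y$ while being $\lambda$-null, which is precisely the assertion $\rho^y\perp\lambda$. Thus $\mu$ is $y$-singular-fibered, and Theorem \ref{LebMargResult} supplies the expansion \eqref{LebMarg}.

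I expect the only delicate point to be the bookkeeping that makes the slicing rigorous: one must ensure that the witness $W$ may be taken Borel (so that the slices $W_y$ are measurable and $y\mapsto\rho^y(W_y)$ and $y\mapsto\lambda(W_y)$ are $\mu_2$-measurable), and that the two integral identities above are genuine instances of disintegration and of Tonelli for the product $\theta$, respectively. These are routine. It is worth emphasizing that this route deliberately avoids performing a fiberwise Lebesgue decomposition of each $\rho^y$ and the attendant measurable-selection difficulties, since it reads the singularity of the fibers off of a single planar null set rather than by decomposing fiber by fiber.
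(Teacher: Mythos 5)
Your proposal is correct and is essentially the paper's own argument: both proofs take a planar Lebesgue-null set carrying $\mu$, use Fubini/Tonelli to see that $\mu_2$-almost-every horizontal slice of that set is Lebesgue-null, and use the disintegration identity to see that $\rho^y$ is concentrated on that slice, whence $\rho^y$ is singular and Theorem \ref{LebMargResult} applies. The only cosmetic differences are that you package the transfer from ``Lebesgue-a.e.\ $y$'' to ``$\mu_2$-a.e.\ $y$'' via the auxiliary product measure $\theta=\lambda\otimes\mu_2$ rather than invoking $\mu_2\ll\lambda$ directly, and that you obtain $\rho^y(W_y^{\mathcal C})=0$ by integrating the indicator rather than by the paper's contradiction argument.
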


\begin{proof}Let $\mu$ be a singular Borel probability measure on $[0,1)^2$. Without loss of generality, suppose that the $y$-marginal measure $\mu_2$ of $\mu$ is essentially equivalent to Lebesgue measure, and let $\mu=\rho^y(dx)\,\mu_2(dy)$ be its Rokhlin disintegration. Let $S\subseteq[0,1)^2$ be a set of Lebesgue measure 0 on which $\mu$ is supported.  Let $S_y=\set{x:(x,y)\in S}$ denote a cross-section of $S$ taken at $y$. Let $\chi_S$ be the indicator function of $S$. We compute that
\begin{align*}0=&\lambda(S)\\
=&\int_{0}^{1}\int_{0}^{1}\chi_S(x,y)\,dx\,dy.
\end{align*}
It follows that $\lambda^1(S_y)=\int_{0}^{1}\chi_S(x,y)\,dx=0$ for Lebesgue-almost-every $y$ and hence for $\mu_{2}$ almost every $y$ since $\mu_{2}$ is absolutely continuous.

Now assume, for the sake of contradiction, that there exists a set $A\subseteq[0,1)$ of $\mu_{2}$ positive measure such that $\rho^y(S_y^\mathcal{C})>0$ for all $y\in A$. Define $X=([0,1)\times A)\setminus S.$ Then $X$ is measurable and $\mu(X)=0$. However, we also have
\begin{align*}\mu(X)=&\int\chi_X(x,y)\mu(dx\,dy)\\
=&\int_A\int_{S_y^\mathcal{C}}\rho^y(dx)\,\mu_2(dy)\\
=&\int_{A}\rho^y(S_y^\mathcal{C})\,\mu_2(dy)\\
>&0,
\end{align*}
which is a contradiction. 

Hence, $\rho^y(S_y^\mathcal{C})=0$ for $\mu_{2}$-almost-every $y$. It follows for $\mu_{2}$-almost-every $y$, $\rho^y$ is supported on a set of Lebesgue measure 0 and is therefore singular. Since $\mu_2$ is essentially equivalent to Lebesgue measure, the result then follows from Theorem \ref{LebMargResult}.
\end{proof}

We collect our results:

\begin{corollary}\label{Cor_Collect}Let $\mu$ be a measure on the half-open unit square. 
\begin{enumerate}\item If the marginal measures of $\mu$ are both singular, then $\mu$ is bi-slice-singular and hence $L^2(\mu)$ admits Fourier series of type $\eqref{SingMarg}$.
\item If the marginal measures of $\mu$ are singular and essentially equivalent to Lebesgue measure, then $L^2(\mu)$ admits Fourier series of type $\eqref{LebMarg}$.
\item If the marginal measures of $\mu$ are both essentially equivalent to Lebesgue measure and $\mu$ itself is singular, then $L^2(\mu)$ admits Fourier series of type $\eqref{LebMarg}$.
\end{enumerate}
\end{corollary}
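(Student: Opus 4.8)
The plan is to prove Corollary \ref{Cor_Collect} by observing that each of its three parts is an immediate consequence of results already established in the excerpt, so the work is entirely one of bookkeeping: matching each hypothesis to the correct prior theorem. I would begin by noting that the corollary is a pure collection statement, and structure the proof as three short paragraphs, one per item.

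For item (1), I would invoke Proposition \ref{prop-simple} directly. Since both marginals are singular, that proposition yields that $\mu$ is bi-slice-singular; in particular $\mu$ is $y$-slice-singular, so Theorem \ref{Thm-23} applies and gives a Fourier series of type \eqref{SingMarg}. No new estimate is required. For item (3), the hypotheses are precisely those of Theorem \ref{SingWithLebMarg} (both marginals essentially equivalent to Lebesgue and $\mu$ itself singular), so the conclusion that $L^2(\mu)$ admits a series of type \eqref{LebMarg} follows at once; here I would simply cite that theorem.

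Item (2) is the one requiring a moment's thought, because it asserts that singular marginals which are \emph{also} essentially equivalent to Lebesgue measure yield series of type \eqref{LebMarg}. The subtlety is that ``singular'' and ``essentially equivalent to Lebesgue'' are contradictory hypotheses for a nontrivial measure on $[0,1)$: a measure with $h,\tfrac1h\in L^\infty$ is absolutely continuous, hence cannot be singular unless it is the zero measure. Thus I expect the main (and only real) obstacle to be interpreting this clause correctly. The honest reading is that item (2) is vacuous as literally stated, or that it is a transitional remark meant to bridge items (1) and (3); I would therefore either note that the hypothesis is never met nontrivially, or, more charitably, read it as the hypothesis that the marginals are essentially equivalent to Lebesgue while $\mu$ is singular, which collapses it into item (3).

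To keep the proof clean I would write it so that each item points to its source, flag the degeneracy in item (2) explicitly, and close. Concretely, the proof reads as follows.

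\begin{proof}
Each item follows from a result established above.

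For (1), since both marginal measures $\mu_1$ and $\mu_2$ are singular, Proposition \ref{prop-simple} shows that $\mu$ is bi-slice-singular; in particular $\mu$ is $y$-slice-singular. Theorem \ref{Thm-23} then provides a Fourier series expansion of type \eqref{SingMarg}.

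For (3), the hypotheses are exactly those of Theorem \ref{SingWithLebMarg}: $\mu$ is singular and (taking either marginal) $\mu$ has a marginal measure essentially equivalent to Lebesgue measure. That theorem yields a Fourier series of type \eqref{LebMarg}.

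Finally, (2) is a limiting case interpolating between (1) and (3). If a marginal measure is essentially equivalent to Lebesgue measure, then its Radon--Nikodym derivative $h$ satisfies $h,\tfrac{1}{h}\in L^\infty$, so the marginal is absolutely continuous and cannot be a nontrivial singular measure; the stated hypothesis is therefore satisfied only in degenerate cases. Reading the intended hypothesis as that of (3)---namely that $\mu$ is singular with marginals essentially equivalent to Lebesgue measure---the conclusion again follows from Theorem \ref{SingWithLebMarg}.
\end{proof}
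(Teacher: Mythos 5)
Your items (1) and (3) are correct and match the paper's proof exactly: Proposition \ref{prop-simple} plus Theorem \ref{Thm-23} for (1), and a direct citation of Theorem \ref{SingWithLebMarg} for (3).

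Item (2), however, contains a genuine gap caused by a misreading of the hypothesis. The intended meaning (which the paper's own proof makes explicit) is not that each marginal is simultaneously singular and essentially equivalent to Lebesgue measure --- that would indeed be vacuous --- but that the two marginals have these properties \emph{respectively}: one marginal is singular, the other is essentially equivalent to Lebesgue measure. This is a non-vacuous and genuinely distinct case, and your proposed repair (``collapse it into item (3)'') does not work: under the real hypotheses of (2), only \emph{one} marginal is essentially equivalent to Lebesgue measure while the other is singular, so the hypotheses of (3) are not satisfied, and moreover (2) does not assume that $\mu$ itself is singular, which is an essential hypothesis of Theorem \ref{SingWithLebMarg}. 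The missing step is to \emph{derive} the singularity of $\mu$ from the singular marginal: if, say, the $x$-marginal $\mu_1$ is singular, then $\mu_1$ is supported on a set $A \subseteq [0,1)$ with $\lambda^1(A) = 0$, hence $\mu$ is supported on $A \times [0,1)$, which has planar Lebesgue measure zero, so $\mu$ is singular. Since Theorem \ref{SingWithLebMarg} requires only that \emph{a} marginal be essentially equivalent to Lebesgue measure, it now applies using the other marginal ($\mu_2$), yielding Fourier series of type \eqref{LebMarg}. This short support argument is the entire content of the paper's proof of (2), and it is what your writeup omits.
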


\begin{proof}(1) is a simple application of Proposition \ref{prop-simple} and Theorem \ref{Thm-23}, done in \cite{HerJorWeb2023}, and (3) is an immediate consequence of Theorem \ref{SingWithLebMarg}. So suppose that $\mu$ is a measure on $[0,1)^2$, and without loss of generality, suppose its $x$-marginal-measure $\mu_1$ is singular. Then $\mu_1$ is supported on a set $A$ of Lebesgue measure $0$, and $\mu$ itself is supported on the set $A\times[0,1)$, which is a set of Lebesgue measure $0$. Hence $\mu$ is singular, and again we may apply Theorem $\ref{SingWithLebMarg}$.
\end{proof}

Even if we have Fourier series of type $\eqref{LebMarg}$, it may be more desirable to have a series of type $\eqref{SingMarg}$. In the situation in which both marginals are singular, this is guaranteed to happen. Otherwise, so long as at least one marginal is singular, it could be the case that the measure is slice-singular, in which case we get type $\eqref{SingMarg}$ Fourier series by Theorem $\ref{Thm-23}$.
The following Theorem \ref{Thm-A} helps address this question for the case of Frostman measures. To prove it, we draw on the following generalized Marstrand slicing theorem, which can be found in \cite{Hoch14} and \cite{BisPer2017}:

\begin{theorem}[Generalized Marstrand Slicing Theorem]
Let $\mu$ be a probability measure on $[0,1]$ such that there exist constants $C,\alpha\geq0$ where 
$$\mu(I)\leq C|I|^{\alpha}$$ for all intervals $I\subseteq [0,1]$. If $S\subseteq[0,1]^2$ is compact,
then 
$$\dim_{H}(S_{x})\leq \max\set{0,\dim_{H}(S)-\alpha}$$ for $\mu$-almost-every-$x$,
where $S_x=\set{y:(x,y)\in S}$ is the cross-section of $S$ taken at $x$.
\end{theorem}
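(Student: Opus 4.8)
The plan is to convert the dimension inequality into a statement about Hausdorff measures of the slices and then establish that by a covering argument in which the Frostman-type hypothesis $\mu(I)\le C|I|^{\alpha}$ governs the $\mu$-size of the shadow on the $x$-axis of a cover of $S$. Write $s_0=\dim_{H}(S)$ and $d=\max\set{0,s_0-\alpha}$. Since $\dim_{H}(S_x)\le d$ for $\mu$-a.e.\ $x$ follows once we know $\mathcal{H}^{t}(S_x)=0$ for $\mu$-a.e.\ $x$ for every $t>d$ — and since it suffices to run this over a countable sequence $t_k\downarrow d$ and intersect the resulting full-measure sets — I would fix an arbitrary $t>d$ and show $\mathcal{H}^{t}(S_x)=0$ for $\mu$-almost every $x$. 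The inequality $t>d\ge s_0-\alpha$ gives $t+\alpha>s_0$, so I may also fix an exponent $s$ with $s_0<s<t+\alpha$.

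Next I would set up the covering estimate, writing $\mathcal{H}^{t}_{\delta}$ for the size-$\delta$ Hausdorff premeasure (the infimum of $\sum_j|V_j|^{t}$ over covers by sets of diameter at most $\delta$). Because $s>s_0=\dim_{H}(S)$ we have $\mathcal{H}^{s}(S)=0$, hence $\mathcal{H}^{s}_{\delta}(S)=0$ for every $\delta>0$; so for each $\delta>0$ I can choose a cover $\set{U_i}$ of $S$ by open sets of diameter $r_i\le\delta$ with $\sum_i r_i^{s}<1$. Fixing $x$, those $U_i$ that meet the vertical line $\set{x}\times[0,1]$ — equivalently those with $x\in\pi_x(U_i)$, where $\pi_x(U_i)$ is the open projection of diameter $\le r_i$ — cut out sets $\set{y:(x,y)\in U_i}$ of diameter $\le r_i\le\delta$ that together cover $S_x$. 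Hence, pointwise in $x$, $\mathcal{H}^{t}_{\delta}(S_x)\le\phi_{\delta}(x):=\sum_{i:\,x\in\pi_x(U_i)}r_i^{t}=\sum_i r_i^{t}\chi_{\pi_x(U_i)}(x)$. The function $\phi_{\delta}$ is manifestly $\mu$-measurable, and by Tonelli together with the Frostman bound $\mu(\pi_x(U_i))\le C r_i^{\alpha}$,
\[\int_0^1\phi_{\delta}\,d\mu=\sum_i r_i^{t}\,\mu(\pi_x(U_i))\le C\sum_i r_i^{t+\alpha}\le C\delta^{\,t+\alpha-s}\sum_i r_i^{s}<C\delta^{\,t+\alpha-s},\]
where the middle inequality uses $r_i\le\delta$ and $t+\alpha-s>0$.

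Finally I would pass from the truncated premeasures to $\mathcal{H}^{t}$ itself. Taking $\delta=\delta_k\to 0$, the estimate shows $\phi_{\delta_k}\to 0$ in $L^1(\mu)$, so along a subsequence $\phi_{\delta_{k_j}}\to 0$ for $\mu$-a.e.\ $x$. For such $x$, since $\mathcal{H}^{t}_{\delta}(S_x)\to\mathcal{H}^{t}(S_x)$ as $\delta\to 0$, the values $\mathcal{H}^{t}_{\delta_{k_j}}(S_x)$ converge to $\mathcal{H}^{t}(S_x)$ while remaining $\le\phi_{\delta_{k_j}}(x)\to 0$; hence $\mathcal{H}^{t}(S_x)=0$ for $\mu$-a.e.\ $x$. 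This yields $\dim_{H}(S_x)\le t$ a.e., and letting $t_k\downarrow d$ through a countable set gives $\dim_{H}(S_x)\le d=\max\set{0,\dim_{H}(S)-\alpha}$ for $\mu$-a.e.\ $x$.

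The step I expect to be the main obstacle — and the one the argument above is arranged to evade — is the measurability of the slicing: $x\mapsto\mathcal{H}^{t}(S_x)$ (even $x\mapsto\mathcal{H}^{t}_{\delta}(S_x)$) need not be Borel, so it cannot simply be integrated. Dominating $\mathcal{H}^{t}_{\delta}(S_x)$ by the explicitly measurable $\phi_{\delta}$ and then extracting an a.e.-convergent subsequence sidesteps this, relying only on measurability of indicators and Tonelli for non-negative integrands. Compactness of $S$ plays only a light role (ensuring the slices are closed and that one may work with well-behaved open covers); the genuine content is the interaction between the $s$-dimensional cover of $S$ and the $\alpha$-Frostman weighting of its projection onto the $x$-axis, which is exactly what turns $\sum_i r_i^{s}$ into $\sum_i r_i^{t+\alpha}$ and manufactures the dimension drop by $\alpha$.
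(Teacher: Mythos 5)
The paper does not actually prove this statement: the Generalized Marstrand Slicing Theorem is imported as a known result, cited to \cite{Hoch14} and \cite{BisPer2017}, and then used as a black box in the proof of Theorem \ref{Thm-A}. So there is no internal proof to compare yours against; what can be said is that your argument is correct, complete, and is essentially the standard first-moment covering proof found in those references. The key points all check out: since $s>\dim_{H}(S)$, for each $\delta>0$ there is a countable open cover $\{U_i\}$ of $S$ with diameters $r_i\le\delta$ and $\sum_i r_i^{s}<1$; the members meeting the line $\{x\}\times[0,1]$ slice out a $\delta$-cover of $S_x$, so $\mathcal{H}^{t}_{\delta}(S_x)\le\phi_{\delta}(x)=\sum_i r_i^{t}\chi_{\pi_x(U_i)}(x)$, which is Borel (indeed lower semicontinuous); the projection $\pi_x(U_i)$ need not be an interval, but having diameter at most $r_i$ it sits inside an interval of length $r_i$, so the Frostman hypothesis still yields $\mu(\pi_x(U_i))\le Cr_i^{\alpha}$ — a half-line of justification you glossed over but which is harmless; and the exponent bookkeeping $t+\alpha-s>0$ gives $\int\phi_{\delta}\,d\mu\le C\delta^{\,t+\alpha-s}\to0$. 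Your passage to an a.e.-convergent subsequence, the use of monotonicity of $\delta\mapsto\mathcal{H}^{t}_{\delta}(S_x)$, and the final intersection over a countable sequence $t_k\downarrow\max\{0,\dim_{H}(S)-\alpha\}$ are all sound, and your observation that dominating by the explicitly measurable $\phi_{\delta}$ sidesteps the measurability of $x\mapsto\mathcal{H}^{t}(S_x)$ is exactly the right way to organize the argument (alternatively, taking $\delta_k=2^{-k}$ and applying Borel--Cantelli to $\mu\{\phi_{\delta_k}>\epsilon_k\}$ would avoid the subsequence, but that is cosmetic). Your closing remark that compactness of $S$ plays only a minor role is also accurate; the estimate itself works for arbitrary $S$.
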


\begin{theorem}\label{Thm-A}
Let $\mu$ be a singular Borel probability measure on the half-open unit square, and let $S\subseteq[0,1]^2$ be a compact set on which $\mu$ is supported. Let the $x$-marginal measure $\mu_1$ of $\mu$ be singular, and suppose that there exist constants $C$ and $\alpha$ such that $\mu_1(I)\leq C\abs{I}^\alpha$ for all intervals $I\subseteq[0,1]$. Suppose also that $\dim_{H}(S)<\alpha+1$. Let $\mu=\rho_x(dy)\,\mu_1(dx)$ be the Rokhlin disintegration of $\mu$. Then  $\rho_x$ is singular for $\mu_1$-almost-every-$x$, and $L^2(\mu)$ admits Fourier expansions of type $\eqref{SingMarg}$.
\end{theorem}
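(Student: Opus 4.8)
The plan is to reduce the statement to an application of Theorem~\ref{Thm-23} by verifying that $\mu$ is $x$-slice-singular. Since $\mu_1$ is singular by hypothesis, it suffices to establish that $\mu$ is $x$-singular-fibered, that is, that $\rho_x$ is singular for $\mu_1$-almost-every $x$. Once this is in hand, $\mu$ is $x$-slice-singular, and the $x$-analogue of Theorem~\ref{Thm-23} immediately produces a Fourier expansion of type~\eqref{SingMarg}. Thus the whole content of the proof is the singularity of the slice measures, and the remaining task is to show that each $\rho_x$ is carried by a Lebesgue-null subset of $[0,1)$.

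First I would feed the compact support $S$ into the Generalized Marstrand Slicing Theorem, taking $\mu_1$ as the slicing measure; this is legitimate because $S$ is compact and $\mu_1$ satisfies the Frostman bound $\mu_1(I)\leq C\abs{I}^\alpha$ with $\alpha\geq 0$. The theorem then yields
$$\dim_{H}(S_x)\leq \max\set{0,\dim_{H}(S)-\alpha}$$
for $\mu_1$-almost-every $x$, where $S_x=\set{y:(x,y)\in S}$. The dimension hypothesis $\dim_{H}(S)<\alpha+1$ forces $\dim_{H}(S)-\alpha<1$, so the right-hand side is \emph{strictly} less than $1$; hence $\dim_{H}(S_x)<1$ for $\mu_1$-almost-every $x$. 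Invoking the standard fact that Hausdorff dimension strictly below $n$ forces vanishing $n$-dimensional Lebesgue measure (here $n=1$), I conclude that $\lambda^1(S_x)=0$ for $\mu_1$-almost-every $x$.

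It remains to show that the slice measure $\rho_x$ is supported on the fiber $S_x$, so that its support inherits the null Lebesgue measure just obtained. I would establish this exactly as in the proof of Theorem~\ref{SingWithLebMarg}, with the roles of $x$ and $y$ interchanged: assuming for contradiction that $\rho_x(S_x^\mathcal{C})>0$ on a set $A$ of positive $\mu_1$-measure, I set $X=(A\times[0,1))\setminus S$, note that $X$ is disjoint from $S$ so that $\mu(X)=0$, yet the disintegration formula gives $\mu(X)=\int_A\rho_x(S_x^\mathcal{C})\,\mu_1(dx)>0$, a contradiction. Hence $\rho_x$ is supported on $S_x$ for $\mu_1$-almost-every $x$; intersecting this with the conclusion of the slicing step shows that, for $\mu_1$-almost-every $x$, $\rho_x$ lives on the Lebesgue-null set $S_x$ and is therefore singular. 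The main obstacle is not a single hard step but the coordination of the two independent almost-everywhere statements together with the sharpness checks: that $\alpha\geq0$ makes the slicing theorem applicable, and that the \emph{strict} inequality $\dim_{H}(S)<\alpha+1$ is precisely what upgrades the bound $\dim_{H}(S_x)\leq 1$ to $\dim_{H}(S_x)<1$ and hence to $\lambda^1(S_x)=0$; the support computation itself is merely a transcription of the one already performed for Theorem~\ref{SingWithLebMarg}.
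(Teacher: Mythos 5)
Your proof is correct and follows essentially the same route as the paper's: apply the Generalized Marstrand Slicing Theorem with the Frostman measure $\mu_1$ to obtain $\dim_{H}(S_x)<1$ for $\mu_1$-almost-every $x$, hence $\lambda^1(S_x)=0$, conclude that the slice measures $\rho_x$ are singular, and invoke Theorem~\ref{Thm-23}. The only difference is that you explicitly justify that $\rho_x$ is supported on $S_x$ via the contradiction argument borrowed from Theorem~\ref{SingWithLebMarg}, a step the paper dispatches with a ``without loss of generality.''
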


\begin{proof} Let $S_x=\set{y:(x,y)\in S}$ be the cross-section of $S$ taken at $x$. Then $S_x$ is Borel. We may assume without loss of generality that each slice measure $\rho_{x}$ from the Rokhlin disintegration is supported on $S_x$. 

By the Generalized Marstrand Slicing Theorem, we have that $$\dim_{H}(S_x)\leq\max\set{0,\dim_H(S)-\alpha}<1$$ for $\mu_1$-almost-every-$x$. It follows that $\mu_1$-almost-every $\rho_x$ is supported on a set of Lebesgue measure 0 and hence singular. 

Thus, $\mu$ is $x$-slice-singular, and by assumption of the support of $\mu$, the marginal measure and the slice measures are measures supported on $[0,1)$.Hence by Theorem \ref{Thm-23}, $L^2(\mu)$ admits Fourier series of type $\eqref{SingMarg}$.
\end{proof}

\begin{note}The same result obtains if we use the $y$-marginal measure $\mu_2$.
\end{note}

\begin{note}If $\mu_1$ has an atomic part, the condition $\mu_1(I)\leq C\abs{I}^\alpha$ can only be satisfied by $\alpha=0$.
\end{note}

\begin{example}If $\mu_1$ is the ternary Cantor measure, then we have $\mu_1(I_\sigma)=\abs{I_\sigma}^{\ln(2)/\ln(3)}$ for any $3$-adic interval $I_\sigma$ that intersects the Cantor set, and so $\mu_1(I_\sigma)\leq\abs{I_\sigma}^{\ln(2)/\ln(3)}$ for any $3$-adic interval. We now use a standard argument: Let $I\subseteq[0,1)$ be any interval, and let $k$ be the smallest integer such that $\frac{1}{3^k}\leq\abs{I}$. Then $I$ can be covered using at most $(3+1)$ $3$-adic intervals of length $\frac{1}{3^k}$. It follows that $\mu_1(I)\leq 4\abs{I}^{\ln(2)/\ln(3)}$. Thus, the theorem will apply so long as $\dim_{H}(S)<\frac{\ln(2)}{\ln(3)}+1=\frac{\ln(6)}{\ln(3)}$.
\end{example}

\begin{example}Consider the Sierpinski triangle $K$, which is a Bedford-McMullen carpet generated by $T=\begin{bmatrix}1&0\\1&1\end{bmatrix}$ (see Definition \ref{Def:GridIFS}). We show the first six iterations of this IFS applied to the unit square:
\begin{figure}[H]\includegraphics[scale=.55]{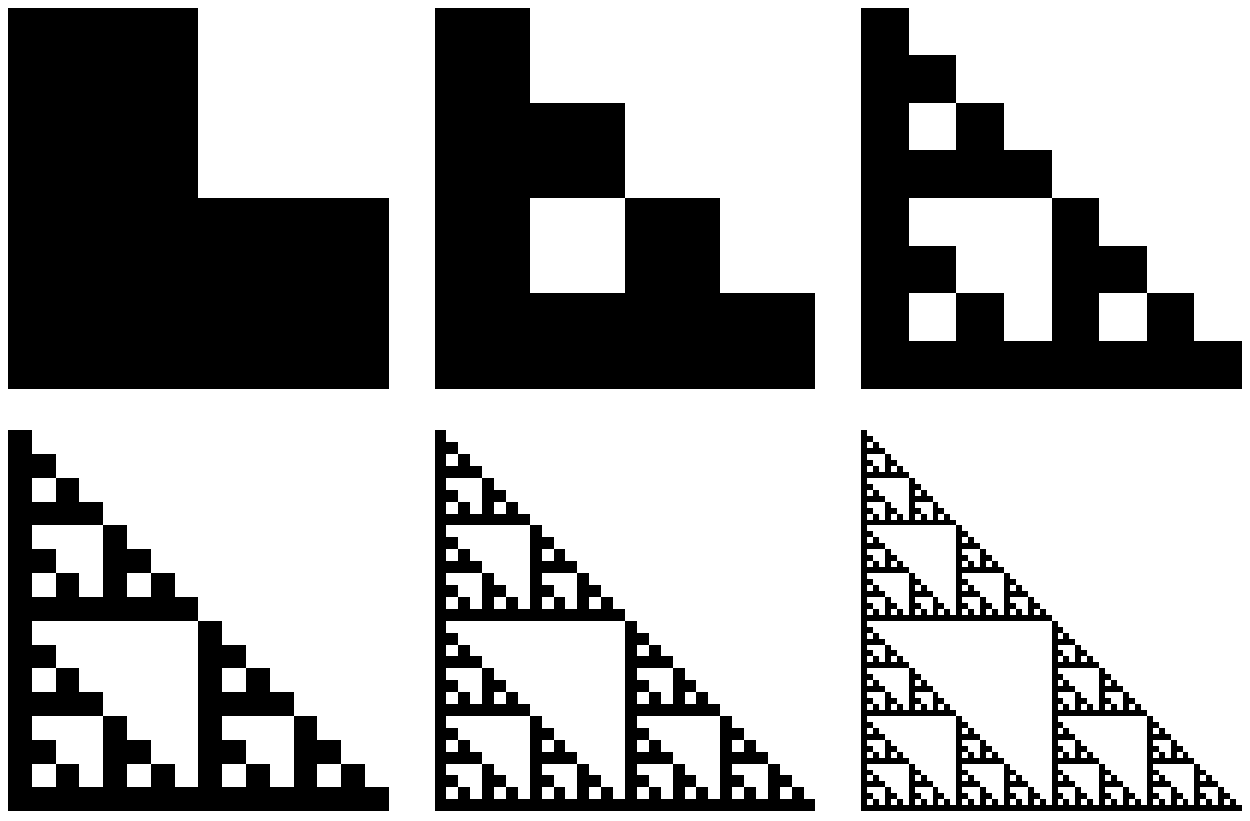}\caption{The Sierpinski Triangle}
\end{figure}

Let $\mu$ be the invariant measure associated to this IFS with equal weights, and let $\mu_1$ be the $x$-marginal measure. On a dyadic interval $I_\sigma$ of length $\frac{1}{2^k}$, we must have $$\p{\frac{1}{3}}^k\leq\mu_1(I_\sigma)\leq\p{\frac{2}{3}}^k=\abs{I_\sigma}^{\frac{\ln(3)}{\ln(2)}-1}.$$
Following the standard argument, it follows that for any interval $I$, $$\mu_1(I)\leq 3\abs{I}^{\frac{\ln(3)}{\ln(2)}-1}.$$

Thus, we may take $\alpha=\frac{\ln(3)}{\ln(2)}-1$. Since the Sierpinski triangle has Hausdorff dimension $\frac{\ln(3)}{\ln(2)}$, we fail to have $\dim_H{K}<\alpha+1$, and the theorem does not apply.

However, if the weights are changed to $p_{0,0}=\frac{1}{4}$, $p_{0,1}=\frac{1}{4}$, and $p_{1,0}=\frac{1}{2}$, then we can reach the same conclusion anyway by the upcoming Corollary \ref{Cor:SquareCarpet}.
\end{example}

\section{Results for Grid IFS's}\label{Sec:Grid}

As a reminder, the following Hutchinson's Theorem \cite{Hutchinson1981Fractals} establishes that iterated function systems (IFS) of contractions give rise to Borel probability measures.

\begin{theorem}[Hutchinson]\label{Thm:Hutchinson}
Let $(X,d)$ be a complete metric space and $\phi=\{\phi_{1},\dots ,\phi_{N}\}$ be a finite set of contractions on $X$. Then there exists a unique compact set $K$ such that
$$K=\bigcup_{k=1}^{N}\phi_{k}(K).$$
Furthermore, $K$ is the closure of the set of fixed points of finite compositions of members of $\phi$.

Moreover, given $w_{1},\dots, w_{N}>0$ such that $\sum w_{k}=1$, there exists a unique Borel probability measure $\mu$, called the invariant measure, that is supported on $K$ and such that
$$\int f\,d\mu=\sum_{k=1}^{N}w_{k}\int f\circ \phi_{k}\, d\mu$$
holds for every continuous function $f$.
\end{theorem}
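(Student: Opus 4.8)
The plan is to invoke the Banach Fixed Point Theorem twice: once on a space of compact sets to produce $K$, and once on a space of measures to produce $\mu$. Throughout, let $c_k<1$ be the contraction ratio of $\phi_k$ and set $c=\max_k c_k<1$, where finiteness of $\phi$ is used to guarantee $c<1$. First I would construct $K$. Let $\mathcal{K}(X)$ denote the non-empty compact subsets of $X$ with the Hausdorff metric $d_H$; since $(X,d)$ is complete, so is $(\mathcal{K}(X),d_H)$. Define the Hutchinson operator $F(A)=\bigcup_{k=1}^{N}\phi_k(A)$, which sends $\mathcal{K}(X)$ into itself because a finite union of compact sets is compact. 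The key estimate combines $d_H(\phi_k(A),\phi_k(B))\le c_k\,d_H(A,B)$ with the fact that $d_H$ respects finite unions, $d_H\p{\bigcup_k A_k,\bigcup_k B_k}\le\max_k d_H(A_k,B_k)$, to give $d_H(F(A),F(B))\le c\,d_H(A,B)$. Thus $F$ is a contraction, and the Banach Fixed Point Theorem yields a unique compact $K$ with $K=F(K)=\bigcup_k\phi_k(K)$.

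For the density claim, I would index finite words $\sigma=(i_1,\dots,i_m)$ and write $\phi_\sigma=\phi_{i_1}\circ\cdots\circ\phi_{i_m}$, a contraction whose unique fixed point $x_\sigma$ lies in $K$: indeed $\phi_\sigma(K)\subseteq K$, so the orbit $\phi_\sigma^n(p)\in K$ converges to $x_\sigma$, and $K$ is closed. Iterating the invariance gives $K=\bigcup_{\abs{\sigma}=m}\phi_\sigma(K)$ with $\operatorname{diam}\phi_\sigma(K)\le c^m\operatorname{diam}K\to 0$. Given $y\in K$ and $\epsilon>0$, choosing $m$ large and $\sigma$ with $y\in\phi_\sigma(K)$ forces $d(y,x_\sigma)<\epsilon$ since both points lie in $\phi_\sigma(K)$; hence the fixed points are dense in $K$, and as $K$ is closed it is their closure.

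Next I would construct $\mu$. Let $\mathcal{P}(K)$ be the Borel probability measures on the (now nonempty, compact) set $K$, equipped with the Wasserstein/Hutchinson metric $d_W(\mu,\nu)=\sup\set{\abs{\int g\,d\mu-\int g\,d\nu}: g \text{ is } 1\text{-Lipschitz}}$, a complete metric space because $K$ is compact. Define the Markov operator $T\nu=\sum_k w_k\,(\phi_k)_\ast\nu$, characterized by $\int f\,dT\nu=\sum_k w_k\int f\circ\phi_k\,d\nu$; it preserves $\mathcal{P}(K)$ since each $(\phi_k)_\ast\nu$ is supported on $\phi_k(K)\subseteq K$. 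For $g$ that is $1$-Lipschitz, $g\circ\phi_k$ is $c_k$-Lipschitz, so rescaling and testing against $T\mu-T\nu$ yields $d_W(T\mu,T\nu)\le\p{\sum_k w_k c_k}d_W(\mu,\nu)\le c\,d_W(\mu,\nu)$. A second application of the Banach Fixed Point Theorem produces the unique $\mu\in\mathcal{P}(K)$ with $T\mu=\mu$, which is exactly the self-referential identity $\int f\,d\mu=\sum_k w_k\int f\circ\phi_k\,d\mu$ for all continuous $f$, and $\mu(K^{\mathcal{C}})=0$ gives the support claim.

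The routine parts are the Lipschitz bookkeeping underlying the two contraction estimates. The steps I expect to require the most care are the ambient completeness facts—that $(\mathcal{K}(X),d_H)$ and $(\mathcal{P}(K),d_W)$ are genuinely complete metric spaces—since these are what license the fixed-point machinery. A secondary subtlety arises if one wants uniqueness of $\mu$ among \emph{all} Borel probability measures rather than only those supported on $K$: one must then argue that any fixed point of $T$ is automatically supported on $K$, so that restricting attention to $\mathcal{P}(K)$ costs no generality.
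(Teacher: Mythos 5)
Your proposal is correct and is essentially the classical argument of Hutchinson's original paper, which this paper does not reprove but simply cites as background. The two applications of the Banach Fixed Point Theorem (on $(\mathcal{K}(X),d_H)$ for $K$, and on $(\mathcal{P}(K),d_W)$ for $\mu$), the diameter-shrinking argument for density of the fixed points of finite compositions, and the flagged subtlety about uniqueness among all Borel probability measures versus those supported on $K$ are all exactly the standard treatment, so nothing further is needed.
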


We will also use the following result of Kakutani \cite{Kakutani1948Equivalence} which relates infinite product measures generated with equivalent components:

\begin{theorem}[Kakutani, 1948]
For each $n\in \mathbb{N}$, let $\mu_{n}$ and $v_{n}$ be measures on $\mathbb{R}$ that are mutually absolutely continuous. Let $\mu=\times_{n=1}^{\infty} \mu_{n}$ and $v=\times_{n=1}^{\infty}v_{n}$. Define $$\rho(v_n,\mu_n)=\int_{\mathbb{R}}\sqrt{\frac{d\mu_n}{dv_n}}\,dv_n.$$
If $$\prod_{n=1}^{\infty}\rho(v_n,\mu_n)>0,$$ then $\mu$ and $v$ are mutually absolutely continuous. Otherwise, if $$\prod_{n=1}^{\infty}\rho(v_n,\mu_n)=0,$$ then $\mu$ and $v$ are mutually singular.
\end{theorem}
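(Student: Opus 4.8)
The plan is to realize both infinite products on the common sequence space and run a nonnegative martingale argument governed by the Hellinger affinities $\rho(v_n,\mu_n)$. As is standard for this dichotomy, take each $\mu_n,v_n$ to be a probability measure, so that $\mu$ and $v$ are Borel probability measures on $\Omega=\mathbb{R}^{\mathbb{N}}$ whose coordinate maps $\omega\mapsto\omega_n$ are independent under each product. Write $f_n=\frac{d\mu_n}{dv_n}$, which exists and is strictly positive $v_n$-almost-everywhere by mutual absolute continuity, and set $X_N(\omega)=\prod_{n=1}^N f_n(\omega_n)$. First I would observe that $X_N$ is exactly the Radon--Nikodym derivative of $\mu$ with respect to $v$ on the sub-$\sigma$-algebra $\mathcal{F}_N$ generated by the first $N$ coordinates, and that relative to the filtration $\set{\mathcal{F}_N}$ it is a nonnegative $v$-martingale (using independence and $\int f_{N+1}\,dv_{N+1}=1$); hence $X_N\to X_\infty$ $v$-almost-everywhere by the martingale convergence theorem. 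The computation tying the martingale to the hypothesis is
\[
\mathbb{E}_v\p{\sqrt{X_N}}=\prod_{n=1}^N\int\sqrt{f_n}\,dv_n=\prod_{n=1}^N\rho(v_n,\mu_n),
\]
and since Cauchy--Schwarz forces each factor to be at most $1$, the partial products decrease to $c:=\prod_{n=1}^\infty\rho(v_n,\mu_n)$.

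For the absolutely continuous case $c>0$, I would show $\sqrt{X_N}$ is Cauchy in $L^2(v)$. Expanding the square and using independence together with $\mathbb{E}_v[X_N]=1$ gives, for $M>N$,
\[
\mathbb{E}_v\p{\p{\sqrt{X_M}-\sqrt{X_N}}^2}=2-2\prod_{n=N+1}^M\rho(v_n,\mu_n),
\]
whose right-hand side tends to $0$ because the tail products approach $1$ when $c>0$. Thus $\sqrt{X_N}$ converges in $L^2(v)$, so $X_N\to X_\infty$ in $L^1(v)$ with $\mathbb{E}_v[X_\infty]=1$. Matching $\mu$ and the measure $X_\infty\,v$ on the cylinder algebra and extending by a monotone-class argument yields $\mu=X_\infty\,v\ll v$; interchanging the roles of $\mu$ and $v$ gives $v\ll\mu$, so the two are mutually absolutely continuous.

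For the singular case $c=0$, Fatou applied to $\sqrt{X_N}\to\sqrt{X_\infty}$ gives $\mathbb{E}_v[\sqrt{X_\infty}]\le\lim_N\prod_{n=1}^N\rho(v_n,\mu_n)=0$, so $X_\infty=0$ $v$-almost-everywhere; hence $B:=\set{X_\infty=0}$ carries full $v$-measure. To show $\mu$ avoids $B$, I would run the reciprocal martingale $1/X_N=\prod_{n=1}^N\frac{dv_n}{d\mu_n}$, which is a nonnegative $\mu$-martingale (again by mutual absolute continuity and $\int\frac{dv_n}{d\mu_n}\,d\mu_n=1$). It converges $\mu$-almost-everywhere to a finite limit, so $X_N$ tends to a limit in $(0,\infty]$ $\mu$-almost-everywhere, forcing $\mu(B)=0$. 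Then $v$ lives on $B$ and $\mu$ on its complement, so $\mu\perp v$.

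The step I expect to be the main obstacle, and where the theorem has real content, is the singular case: the $v$-martingale argument only certifies that the limit vanishes $v$-almost-everywhere, and one must independently verify that this very limit set is $\mu$-null. The reciprocal-martingale device, equivalently the symmetry of the affinity in $\mu$ and $v$, is what closes this gap, and omitting it would leave the dichotomy unproven. A secondary point requiring care is upgrading almost-everywhere convergence of $X_N$ to an honest identity of measures on the full product $\sigma$-algebra, for which the $L^1$-convergence from the absolutely continuous case and a cylinder-uniqueness argument are essential rather than pointwise convergence alone.
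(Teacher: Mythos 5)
Your proof is correct, but there is nothing in the paper to compare it against: the paper does not prove this theorem at all. It is quoted as a known result, cited to Kakutani's 1948 paper, and then used (via the IFS dichotomy, Theorem \ref{Thm:KakutaniIFS}) as a black box. So the comparison here is really with the classical literature, and your argument is the standard martingale proof of Kakutani's dichotomy: realize everything on the sequence space, let $X_N=\prod_{n=1}^N \frac{d\mu_n}{dv_n}(\omega_n)$ be the density martingale, tie the hypothesis to the martingale via $\mathbb{E}_v\bigl(\sqrt{X_N}\bigr)=\prod_{n=1}^N\rho(v_n,\mu_n)$, get equivalence from the $L^2(v)$-Cauchy estimate $2-2\prod_{n=N+1}^M\rho(v_n,\mu_n)$ when the infinite product is positive, and get singularity from Fatou plus the reciprocal $\mu$-martingale when it vanishes. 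All the key steps check out: the Cauchy--Schwarz bound $\rho\leq 1$, the $L^1$-convergence and cylinder/monotone-class identification $\mu=X_\infty\,v$, the symmetry of the Hellinger affinity needed to interchange $\mu$ and $v$, and the reciprocal-martingale device that rules out $X_N\to 0$ under $\mu$. Two small remarks. First, your restriction to probability measures is not merely ``standard'': the paper's statement for general measures is imprecise, since the infinite product $\times_{n=1}^\infty\mu_n$ is only well-defined (by Kolmogorov extension) for probability measures, so your reading is the correct one and matches Kakutani's original setting. Second, your closing diagnosis is apt: the singular half genuinely requires the symmetric (reciprocal) argument, since the $v$-martingale alone says nothing about $\mu$-measure of the limit set; this is exactly where a naive one-sided argument would fail.
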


When applied to the case of invariant measures of IFS's, the Kakutani Theorem yields the following known result:
\begin{theorem}[Kakutani Dichotomy Theorem for IFS Measures]\label{Thm:KakutaniIFS}
Suppose that $\{\phi_{1},\dots ,\phi_{N}\}$ are contractions acting on a metric space $(X,d)$, and let $\mu$ and $\tilde{\mu}$ be the invariant measures arising from these contractions with positive weights $\{w_{1},\dots ,w_{N}\}$ and $\{\tilde{w}_{1},\dots ,\tilde{w}_{N}\}$ respectively. Then exactly one of the following holds:
\begin{enumerate}
    \item $\mu=\tilde{\mu}$, which is equivalent to $w_{j}=\tilde{w_{j}}$ for all $j$.
    \item $\mu$ and $\tilde{\mu}$ are mutually singular.
\end{enumerate}
\end{theorem}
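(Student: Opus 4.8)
The plan is to transport the problem to the symbolic code space, where the invariant measures become infinite product measures to which Kakutani's theorem applies directly. Let $\Sigma=\set{1,\dots,N}^{\mathbb N}$ be the one-sided shift space, and let $\pi\colon\Sigma\to K$ be the usual address (coding) map $\pi(i_1i_2\cdots)=\lim_{k\to\infty}\phi_{i_1}\circ\cdots\circ\phi_{i_k}(x_0)$, which is well defined, continuous, and independent of the base point $x_0$ because the $\phi_j$ are contractions. Let $\nu$ and $\tilde\nu$ be the probability measures on $\set{1,\dots,N}$ assigning mass $w_j$ and $\tilde w_j$ to $j$, and let $\beta=\times_{n=1}^\infty\nu$ and $\tilde\beta=\times_{n=1}^\infty\tilde\nu$ be the corresponding Bernoulli measures on $\Sigma$. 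First I would check that $\pi_*\beta$ satisfies the Hutchinson invariance identity $\int f\,d(\pi_*\beta)=\sum_j w_j\int f\circ\phi_j\,d(\pi_*\beta)$, using $\pi(j\omega)=\phi_j(\pi(\omega))$ together with the decomposition of $\beta$ by its first coordinate; by the uniqueness clause of Hutchinson's Theorem \ref{Thm:Hutchinson} this forces $\mu=\pi_*\beta$, and likewise $\tilde\mu=\pi_*\tilde\beta$.

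Next I would compute the Kakutani inner products. Since the marginals are identical in every coordinate, each factor equals $r:=\rho(\tilde\nu,\nu)=\sum_{j=1}^N\sqrt{w_j\tilde w_j}$, so that $\prod_{n=1}^\infty\rho(\tilde\nu,\nu)=\lim_{k\to\infty}r^k$. By Cauchy--Schwarz, $r=\sum_j\sqrt{w_j}\sqrt{\tilde w_j}\le\p{\sum_j w_j}^{1/2}\p{\sum_j\tilde w_j}^{1/2}=1$, with equality precisely when the vectors $(\sqrt{w_j})_j$ and $(\sqrt{\tilde w_j})_j$ are parallel, i.e. (both being unit vectors with nonnegative entries) precisely when $w_j=\tilde w_j$ for all $j$. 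Thus there are exactly two cases: either $r=1$, in which case $\nu=\tilde\nu$, hence $\beta=\tilde\beta$ as measures, and so $\mu=\pi_*\beta=\pi_*\tilde\beta=\tilde\mu$; or $r<1$, in which case $r^k\to0$ and the Kakutani Theorem yields $\beta\perp\tilde\beta$ on $\Sigma$.

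The delicate point, and the step I expect to be the main obstacle, is transferring mutual singularity from the code space back to $K$, since the pushforward of mutually singular measures under the possibly non-injective map $\pi$ need not remain singular. I would resolve this by working under a separation hypothesis making $\pi$ injective off a $\beta$- and $\tilde\beta$-null set, so that $\pi$ restricts to a measurable isomorphism carrying $\beta\perp\tilde\beta$ to $\mu\perp\tilde\mu$: concretely, if $E\subseteq\Sigma$ carries $\beta$ with $\tilde\beta(E)=0$, then $\pi(E)$ and its complement separate $\mu$ and $\tilde\mu$ once overlaps are $\beta,\tilde\beta$-negligible. This is exactly the situation for the grid IFS's of Section \ref{Sec:Grid}, whose maps tile a grid and hence satisfy the open set condition. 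I should also flag that the asserted equivalence ``$\mu=\tilde\mu\iff w_j=\tilde w_j$'' in alternative (1) can only be expected under such non-degeneracy: if two of the $\phi_j$ coincide, then $\mu=\tilde\mu$ may hold with $w\ne\tilde w$, so the reverse implication genuinely needs the separation assumption. Under that assumption, injectivity of $\pi$ lets one pull $\pi_*\beta=\pi_*\tilde\beta$ back to $\beta=\tilde\beta$, restoring the stated equivalence and completing the dichotomy.
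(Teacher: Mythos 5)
The paper itself offers no proof of this statement---it is quoted as a known consequence of the Kakutani theorem---so there is nothing in-text to compare against; your argument supplies the standard derivation the authors presumably intend, and its core is correct. The identification $\mu=\pi_*\beta$, $\tilde\mu=\pi_*\tilde\beta$ via the invariance identity and Hutchinson uniqueness is right, and the Kakutani computation is right: every factor equals $r=\sum_j\sqrt{w_j\tilde w_j}$, and Cauchy--Schwarz applied to the unit vectors $\p{\sqrt{w_j}}_j$ and $\p{\sqrt{\tilde w_j}}_j$ gives $r\le1$ with equality iff $w_j=\tilde w_j$ for all $j$, so either $\beta=\tilde\beta$ or $\beta\perp\tilde\beta$ on the code space. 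Your flagged caveat is a genuine observation about the statement rather than a defect of your proof: for arbitrary contractions the theorem as literally stated fails (take $\phi_1=\phi_2$, so that $\mu=\tilde\mu$ while $w\neq\tilde w$, killing the equivalence in alternative (1)), and mutual singularity upstairs need not survive pushforward through a non-injective $\pi$, so some separation hypothesis is required. That hypothesis holds in every application the paper makes: for grid IFS's and their axis projections the cells (resp.\ the intervals $[i/n,(i+1)/n)$) are disjoint, so $\pi$ fails to be injective only over the countable set of points admitting two $m$-adic or $n$-adic expansions, whose $\pi$-preimage is countable and hence $\beta$- and $\tilde\beta$-null because no single symbol carries full weight. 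The one step left loose in your transfer argument is the measurability of $\pi(E)$: after discarding the non-injectivity set you should either invoke the Lusin--Suslin theorem (an injective Borel image of a Borel set is Borel) or replace $\pi(E)$ by a Borel hull via outer regularity; with that patch the separating set downstairs is legitimate and the dichotomy follows.
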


Thus, if the weights given to an IFS are different, the induced invariant measures are mutually singular.

In this section, we will focus on IFS's that can be thought of as dividing the unit square into an $m\times n$ grid of rectangles, keeping some of them, and then iteratively copying the remaining image into each one. More precisely, we have the following:

\begin{definition}\label{Def:GridIFS}Let $T=(t_{ij})$ be an $m\times n$ matrix with indexing as described below and whose entries consist entirely of $0$'s and $1$'s. Then the IFS whose contractions are
$$\set{\tau_{ij}(x,y)=\p{\frac{x}{n}+\frac{i}{n},\frac{y}{m}+\frac{j}{m}}:t_{ij}=1}$$
is called a \textbf{grid IFS}, and we refer to its compact attractor as a \textbf{Bedford-McMullen carpet}.
\end{definition}

We will assume that $T$ is indexed beginning at 0, with rows indexed by $0\leq j\leq m-1$ bottom to top, and columns indexed by $0\leq i\leq n-1$ left to right. While nonstandard for matrices, these conventions have a number of conveniences in our situation, among which are that the visual appearance of the matrix will match the visual appearance of the image of the induced IFS applied to the unit square, with the 1's of the matrix matching up with the cells kept by the IFS. Furthermore, the indices will correspond to the digits of base-$n$ and base-$m$ expansions and also the shifts of the IFS.

With a grid IFS in place, we may create a set of weights $p_{ij}$ for the contractions, and summarize them with an $m\times n$ matrix $P=(p_{ij})$, indexed as $T$ is, where $p_{ij}=0$ if $t_{ij}=0$, $p_{ij}>0$ if $t_{ij}=1$, and $\sum_{i,j}p_{ij}=1$. By Theorem \ref{Thm:Hutchinson}, there then exists an invariant measure $\mu$ supported on the attractor of the IFS such that \begin{equation}\int f\,d\mu=\sum_{t_{ij}=1}p_{ij}\int f\circ\tau_{ij}\,d\mu\end{equation} for every continuous function $f$.

The marginal measures of grid IFS's are also IFS-induced measures. The $x$-marginal measure $\mu_1$ is the invariant measure of the IFS
\begin{equation}\set{\tilde{\tau}_i(x):=\frac{x}{n}+\frac{i}{n}:0\leq i<n,t_{ij}=1\text{ for some }j}
\end{equation}
with weights $w_i=\sum_{j=0}^{m-1}p_{ij}$. A similar result holds for $\mu_2$. One may think of this as projecting the IFS to obtain an IFS in one dimension.

Some nuance must be employed when interpreting the weights $p_{ij}$ and their relationship to the invariant measure $\mu$. Properly speaking, these weights do not correspond to the $\mu$-measure of the cells, but to a measure placed on a space of symbolic representations of coordinates $(x,y)$. We refer the reader to Appendix \ref{App:SymbolicRepresentations} for a treatise. We will use the idea behind this representation of $\mu$ in the proof of Theorem \ref{Thm-fullgrid}. 

\begin{corollary}[Rectangular Grid IFS's]\label{Cor-A}Let $T$ be an $m\times n$ matrix, with $2\leq m\leq n$, and let $K$ be the corresponding Bedford-McMullen carpet, the attractor of the IFS $$\set{\tau_{ij}(x,y)=\p{\frac{x}{n}+\frac{i}{n},\frac{y}{m}+\frac{j}{m}}:t_{ij}=1}.$$ Let $p_{ij}$ be the weights of this IFS, which are positive. Let $\mu$ be the invariant measure of this IFS, with Rokhlin disintegrations $\mu=\rho_x(dy)\,\mu_1(dx)=\rho^y(dx)\,\mu_2(dy)$. Let \begin{align*}\gamma_1=\max_{i}\set{\sum_{\substack{j\\t_{ij}=1}}p_{ij}}\\
\gamma_2=\max_{j}\set{\sum_{\substack{i\\t_{ij}=1}}p_{ij}}\end{align*} be the max total weight of the IFS in any single column or row, respectively.

Finally, let $r_j$ denote the number of 1's in the $j$th row of $T$, and define 
\begin{align*}\xi_1&=\p{\sum_{j=0}^{m-1}r_j^{\frac{\ln(m)}{\ln(n)}}}^{\frac{\ln(n)}{\ln(m)}},\\
\xi_2&=\p{\sum_{j=0}^{m-1}r_j^{\frac{\ln(m)}{\ln(n)}}}.
\end{align*}

Then we have the following:
\begin{enumerate}
\item If $\gamma_1<\frac{n}{\xi_1}$, then $\mu_1$-almost every slice measure $\rho_x$ is singular, and $L^2(\mu)$ admits Fourier series of type $\eqref{SingMarg}$ or $\eqref{LebMarg}$ according to whether $\mu_1$ is singular or Lebesgue.
\item If $\gamma_2<\frac{m}{\xi_2}$, then $\mu_2$-almost every slice measure $\rho^y$ is singular, and $L^2(\mu)$ admits Fourier series of type $\eqref{SingMarg}$ or $\eqref{LebMarg}$ according to whether $\mu_2$ is singular or Lebesgue.
\end{enumerate}
\end{corollary}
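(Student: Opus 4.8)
The plan is to deduce both statements from the Generalized Marstrand Slicing Theorem applied to the $x$-marginal (for (1)) and the $y$-marginal (for (2)), and then to feed the resulting fiber-singularity into Theorem \ref{Thm-A} (when the relevant marginal is singular) or into Theorem \ref{LebMargResult} (when it is essentially equivalent to Lebesgue measure). Taking $S=K$, the compact support of $\mu$, I would verify the two hypotheses of the slicing theorem for $\mu_1$: first a Frostman-type exponent $\alpha$ with $\mu_1(I)\leq C|I|^\alpha$, and second the dimension bound $\dim_H(K)<\alpha+1$. The heart of the matter is that the hypothesis $\gamma_1<\frac{n}{\xi_1}$ is an exact rewriting of $\dim_H(K)<\alpha+1$.

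For the Frostman bound, recall that $\mu_1$ is the self-similar measure of the projected IFS $\{\tilde{\tau}_i\}$ with contraction ratio $\frac{1}{n}$ and weights $w_i=\sum_{j}p_{ij}$, so that $\gamma_1=\max_i w_i$. Since the $\tilde{\tau}_i$ tile $[0,1)$ by disjoint $n$-adic subintervals (open set condition), any $n$-adic interval $I_\sigma$ of length $n^{-k}$ satisfies $\mu_1(I_\sigma)=\prod_{\ell=1}^{k}w_{i_\ell}\leq\gamma_1^{k}=(n^{-k})^{\alpha}$ with $\alpha:=-\log_n\gamma_1=\frac{\ln(1/\gamma_1)}{\ln n}>0$. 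The same standard covering argument used in the Cantor and Sierpinski examples (cover an arbitrary $I$ by at most $n+1$ many $n$-adic intervals of comparable length) upgrades this to $\mu_1(I)\leq(n+1)|I|^{\alpha}$ for every interval $I$.

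For the dimension bound, McMullen's formula as in \cite{BisPer2017} gives $\dim_H(K)=\log_n\xi_1=\log_m\xi_2$, the two expressions agreeing because $\xi_1=\xi_2^{\,\ln n/\ln m}$. Then $\dim_H(K)<\alpha+1$ reads $\log_n\xi_1<\log_n(1/\gamma_1)+1=\log_n(n/\gamma_1)$, which is precisely $\gamma_1<\frac{n}{\xi_1}$. Consequently the Generalized Marstrand Slicing Theorem yields $\dim_H(S_x)\leq\max\{0,\dim_H(K)-\alpha\}<1$ for $\mu_1$-almost every $x$, so each slice $\rho_x$ is supported on a Lebesgue-null set and is therefore singular; that is, $\mu$ is $x$-singular-fibered. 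To conclude I would invoke the Kakutani dichotomy (Theorem \ref{Thm:KakutaniIFS}) to note that the self-similar measure $\mu_1$ is either equal to Lebesgue measure (full column set with uniform column weights) or mutually singular with it: in the former case Theorem \ref{LebMargResult} gives a series of type \eqref{LebMarg}, and in the latter Theorem \ref{Thm-23} gives one of type \eqref{SingMarg}. Statement (2) follows by the identical argument with the coordinates exchanged, using $\mu_2$ (contraction $\frac{1}{m}$, exponent $\alpha'=-\log_m\gamma_2$) and writing $\dim_H(K)=\log_m\xi_2$, so that $\dim_H(K)<\alpha'+1$ becomes $\gamma_2<\frac{m}{\xi_2}$.

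The point demanding the most care is the bookkeeping across the two bases: the single quantity $\dim_H(K)$ must be presented as $\log_n\xi_1$ for the vertical-slice argument and as $\log_m\xi_2$ for the horizontal-slice argument, and one must confirm that the exponent definitions of $\xi_1,\xi_2$ match McMullen's formula under the paper's $0$-indexed, bottom-to-top matrix convention. A secondary subtlety is that Theorem \ref{Thm-A} as stated assumes a singular marginal, so the Lebesgue-marginal case cannot be quoted verbatim; instead one reuses only the slicing step (which needs the Frostman bound, not singularity of $\mu_1$) to establish $x$-singular-fiberedness and then appeals directly to Theorem \ref{LebMargResult}.
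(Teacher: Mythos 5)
Your proposal is correct, and its core is essentially the paper's own argument: the Frostman exponent $\alpha=-\log_n\gamma_1$ extracted from the maximal column weight, McMullen's dimension formula for $K$, the Generalized Marstrand Slicing Theorem to make $\mu_1$-almost-every fiber singular, and the Kakutani dichotomy to classify the marginal. Two of your choices are genuine (if small) improvements in economy. First, you observe that the hypothesis $\gamma_1<\frac{n}{\xi_1}$ is \emph{equivalent} to $\dim_H(K)<\alpha+1$; the paper additionally proves the lower bound $\frac{1}{\xi_1}\leq\gamma_1$ (via monotonicity of $x\mapsto\bigl(\sum_j r_j^x\bigr)^{1/x}$), which yields $\alpha\leq\dim_H(K)$ but is not actually needed for the conclusion. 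Second, in the Lebesgue-marginal case the paper argues $\gamma_1=\frac{1}{n}$, hence $\alpha=1$, hence $\dim_H(K)<2$, hence $\mu$ is singular, and then invokes Theorem \ref{SingWithLebMarg}; you instead feed the singular-fiberedness already obtained from the slicing step directly into Theorem \ref{LebMargResult}. Both are valid, and your route correctly identifies that the slicing step needs only the Frostman bound, not singularity of $\mu_1$, so Theorem \ref{Thm-A} need not be quoted verbatim.

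Two technical points the paper handles that you should not skip. First, every Fourier theorem you cite (Theorems \ref{Thm-23}, \ref{Thm-A}, \ref{LebMargResult}, \ref{SingWithLebMarg}) is stated for measures on the half-open square $[0,1)^2$, while the invariant measure of a grid IFS lives on $[0,1]^2$ and can, in degenerate cases (a single column or row carrying all the weight), be supported entirely on the boundary. The paper disposes of this by noting that neither marginal can have atoms at both $0$ and $1$ (that would require two columns, respectively rows, of full weight), so $\mu$ is supported on one of the four half-open squares, to which the theorems extend. Second, your appeal to Theorem \ref{Thm:KakutaniIFS} implicitly assumes the projected IFS contains all $n$ maps: if $T$ has an empty column, Lebesgue measure is not an invariant measure of the projected IFS, and the Kakutani dichotomy does not compare $\mu_1$ with it. In that case one argues instead, as the paper does, that the projection of $K$ to the $x$-axis has Hausdorff dimension less than $1$, so $\mu_1$ is singular outright. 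Neither point threatens your argument, but both are needed for a complete proof.
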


\begin{proof}By \cite{McM1984}, we have that $$\dim_{H}(K)=\frac{\ln\p{\sum_{j=0}^{m-1}r_j^{\ln(m)/\ln(n)}}}{\ln{m}}.$$

Let $\mu_1$ be the $x$-marginal measure of $\mu$. If $T$ contains a column of all 0's, then it follows by projecting the IFS onto the $x$-axis that the Hausdorff dimension of the projection of $K$ onto the $x$-axis is less than 1 and hence has Lebesgue measure zero. Thus $\mu_1$ is a singular measure. Otherwise, we may apply the Kakutani Dichotomy Theorem, and we have that $\mu_1$ is either Lebesgue measure itself or singular with respect to Lebesgue measure. By a symmetric argument, we have that the $y$-mariginal measure $\mu_2$ also is either Lebesgue measure or singular.

Now, neither $\mu_1$ nor $\mu_2$ can have atoms at both $0$ and $1$, for that would require two columns (respectively, rows) to have weight 1. We may thus assume that $\mu$ is supported on one of the following four sets: $[0,1)^2$, $[0,1)\times(0,1]$, $(0,1]\times[0,1)$, or $(0,1]^2$.

For any $n$-adic interval $I_\sigma$ of length $\frac{1}{n^k}$, we have $\mu_1(I_\sigma)\leq\gamma_1^k=\abs{I_\sigma}^{-\frac{\ln(\gamma_1)}{\ln(n)}}$. As usual, this implies $\mu_1(I)\leq(n+1)\abs{I}^\alpha$ for any interval $I$, where $\alpha=-\frac{\ln(\gamma_1)}{\ln(n)}.$ Following similar logic, if $\mu_2$ is the $y$-marginal measure of $\mu$, then for any interval $I$, $\mu_2(I)\leq(m+1)\abs{I}^{\beta}$ where $\beta=-\frac{\ln(\gamma_2)}{\ln(m)}.$ 

Now we will establish that $\frac{1}{\xi_1}\leq\gamma_1$. Let $N$ be the number of 1's in $T$. Note that $N=\sum_{j=0}^{m-1}r_j$. It must be the case that $\frac{1}{N}\leq\gamma_1$. Otherwise, since $\gamma_1$ is the maximum sum of the weights $p_{ij}$ in a column, we would have $p_{ij}<\frac{1}{N}$ for all $i,j$, and it would follow that $\sum p_{ij}<1$, contrary to the fact that $\sum p_{ij}=1$.

Now define $f(x)$ on $(0,\infty)$ by $$f(x)=\p{\sum_{j=0}^{m-1}r_j^x}^\frac{1}{x}=\p{\sum_{r_j\neq0}r_j^x}^\frac{1}{x}.$$

Note that $f(x)$ is positive and continuous. By using logarithmic differentiation, we see that

\begin{align*}\frac{df}{dx}=&f(x)\cdot\p{-\frac{1}{x^2}\ln\p{\sum_{r_j\neq0}r_j^x}+\frac{1}{x}\frac{\sum_{r_j\neq0}\ln(r_j)r_j^x}{\sum_{r_j\neq0}r_j^x}}\\
=&f(x)\cdot\frac{x\sum_{r_j\neq0}\ln(r_j)r_j^x-\ln\p{\sum_{r_j\neq0}r_j^x}\sum_{r_j\neq0}r_j^x}{x^2\sum_{r_j\neq0}r_j^x}\\
=&f(x)\cdot\frac{\sum_{r_j\neq0}\p{x\ln(r_j)-\ln\p{\sum_{r_k\neq0}r_k^x}}r_j^x}{x^2\sum_{r_j\neq0}r_j^x}.
\end{align*}

Note that $f(x)$ is positive, the denominator is positive, and each $r_j^x$ is nonnegative. Furthermore, for each $j$, we have
\begin{align*}x\ln(r_j)-\ln\p{\sum_{r_k\neq0}r_k^x}&\leq x\ln(r_j)-\ln(r_j^x)=0.\end{align*}

It follows that $\frac{df}{dx}$ is nonpositive on $(0,\infty)$, and hence that $f(x)$ is nonincreasing on $(0,\infty)$. Thus, since $\frac{\ln(m)}{\ln(n)}<1$, we have
$$N=\sum_{j=0}^{m-1}r_j=f(1)\leq f\p{\frac{\ln(m)}{\ln(n)}}=\xi_1.$$

Thus, $\frac{1}{\xi_1}\leq \frac{1}{N}\leq\gamma_1$, as desired. 

Now we will establish that $\frac{1}{\xi_2}\leq\gamma_2$.  Let $M=\abs{\set{j:r_j>0}}$. We must have $\gamma_2\geq\frac{1}{M}$, for otherwise $\sum p_{ij}=\sum_{r_j> 0}\sum_{i=0}^{n-1}p_{ij}<\sum_{r_j>0}\frac{1}{M}=1$, contrary to the fact that $\sum p_{ij}=1.$ At the same time, it is clear that $\xi_2\geq M$. Hence, $\frac{1}{\xi_2}\leq\frac{1}{M}\leq\gamma_2$, as desired.

By hypothesis, we have that $\gamma_1<\frac{n}{\xi_1}$ or $\gamma_2<\frac{m}{\xi_2}$.

First, suppose $\gamma_1<\frac{n}{\xi_1}$. Then

\begin{align*}\frac{1}{\xi_1}&\leq\gamma_1<\frac{n}{\xi_1}\\
1&\leq\xi_1\gamma_1<n\\
\frac{1}{\gamma_1}&\leq\xi_1<\frac{n}{\gamma_1}\\
-\ln(\gamma_1)&\leq\frac{\ln(n)}{\ln(m)}\ln\p{\sum_{j=0}^{m-1}r_j^\frac{\ln(m)}{\ln(n)}}<-\ln(\gamma_1)+\ln(n)\\
-\frac{\ln(\gamma_1)}{\ln(n)}&\leq\frac{\ln\p{\ds\sum_{j=0}^{m-1}r_j^\frac{\ln(m)}{\ln(n)}}}{\ln(m)}<-\frac{\ln(\gamma_1)}{\ln(n)}+1\\
\alpha&\leq\dim_{H}(K)<\alpha+1.
\end{align*}
If $\mu_1$ is singular, Theorem \ref{Thm-A} applies to $\mu_1$, showing that the measure $\mu$ admits Fourier series of type $\eqref{SingMarg}$. The only way $\mu_1$ can be Lebesgue measure is if the total weight in every column is $\frac{1}{n}$, implying $\gamma_1=\frac{1}{n}$, and so $\alpha=1$. This implies $\dim_{H}(K)<2$, so that $K$ has Lebesgue measure 0. Thus $\mu$ is singular and Theorem \ref{SingWithLebMarg} applies, and we get Fourier series of type $\eqref{LebMarg}$.

Now suppose $\gamma_2<\frac{m}{\xi_2}$. Then

\begin{align*}\frac{1}{\xi_2}&\leq\gamma_2<\frac{m}{\xi_2}\\
1&\leq\xi_2\gamma_2<m\\
\frac{1}{\gamma_2}&\leq\xi_2<\frac{m}{\gamma_2}\\
-\ln(\gamma_2)&\leq\ln\p{\sum_{j=0}^{m-1}r_j^\frac{\ln(m)}{\ln(n)}}<-\ln(\gamma_2)+\ln(m)\\
-\frac{\ln(\gamma_2)}{\ln(m)}&\leq\frac{\ln\p{\ds\sum_{j=0}^{m-1}r_j^\frac{\ln(m)}{\ln(n)}}}{\ln(m)}<-\frac{\ln(\gamma_2)}{\ln(m)}+1\\
\beta&\leq\dim_{H}(K)<\beta+1.
\end{align*}

By an argument symmetric to the previous one, we get Fourier series of type $\eqref{SingMarg}$ if $\mu_2$ is singular, and of type $\eqref{LebMarg}$ if $\mu_2$ is Lebesgue.
\end{proof}

\begin{note}If $2\leq m\leq n$, then by exchanging the roles of $x$ and $y$ and the rows and columns, we arrive at a similar result.
\end{note}

\begin{corollary}[Square Grid IFS's]\label{Cor:SquareCarpet} Let $K$ be a Bedford-McMullen carpet generated by an $m\times m$ matrix $T$, and let $\mu$ be the invariant measure of the associated IFS $$\set{\tau_{ij}(x,y)=(\frac{x}{m}+\frac{i}{m},\frac{y}{m}+\frac{j}{m}):t_{ij}=1}$$ with positive weights $p_{ij}$. Let $N$ be the number of $1$'s in $T$, and let $\ds\gamma=\max_{i}\set{\sum_{\substack{j\\t_{ij}=1}}p_{ij}}$ be the maximum total weight of the IFS in a column. If $\gamma<\frac{m}{N}$, then $L^2(\mu)$ admits sequential Fourier expansions of type $\eqref{SingMarg}$ or $\eqref{LebMarg}$ according to whether $\mu_1$ is singular or Lebesgue.
\end{corollary}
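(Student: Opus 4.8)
The plan is to recognize this as the square ($m=n$) specialization of Corollary \ref{Cor-A}(1) and to run the same chain of estimates, which simplifies considerably because the exponent $\ln(m)/\ln(n)$ now equals $1$. With $n=m$ one has $\gamma_1=\gamma$ and $\xi_1=\p{\sum_{j=0}^{m-1}r_j^{1}}^{1}=\sum_{j=0}^{m-1}r_j=N$, so the hypothesis $\gamma<\frac{m}{N}$ is exactly the condition $\gamma_1<\frac{n}{\xi_1}$ of Corollary \ref{Cor-A}(1). One could in principle just quote that corollary, but since its proof nominally invokes $\ln(m)/\ln(n)<1$ to obtain $N\leq\xi_1$, I would instead reprove the (now trivial, since $N=\xi_1$) comparison directly.

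First I would record the Hausdorff dimension: when $m=n$ the carpet is self-similar, built from $N$ maps each of ratio $1/m$ satisfying the open set condition, and McMullen's formula \cite{McM1984} reduces to the classical similarity dimension $\dim_{H}(K)=\frac{\ln N}{\ln m}$. Next, exactly as in the proof of Corollary \ref{Cor-A}, the Kakutani Dichotomy Theorem \ref{Thm:KakutaniIFS} (applied to the projected $x$-IFS whose weights are the column sums $w_i=\sum_j p_{ij}$) shows that $\mu_1$ is either Lebesgue measure or singular with respect to it, and the $m$-adic estimate $\mu_1(I_\sigma)\leq\gamma^k$ upgrades via the standard covering argument to $\mu_1(I)\leq(m+1)\abs{I}^\alpha$ for every interval $I$, with $\alpha=-\frac{\ln\gamma}{\ln m}$.

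The heart of the matter is then the two-sided comparison $\alpha\leq\dim_{H}(K)<\alpha+1$. Since the $N$ positive weights sum to $1$, the largest of them is at least $\frac1N$, so the column containing it has total weight at least $\frac1N$; hence $\gamma\geq\frac1N$, giving $\alpha\leq\frac{\ln N}{\ln m}=\dim_{H}(K)$. The hypothesis $\gamma<\frac{m}{N}$ gives $\ln\gamma<\ln m-\ln N$, whence $\alpha>\frac{\ln N}{\ln m}-1=\dim_{H}(K)-1$, i.e.\ $\dim_{H}(K)<\alpha+1$. Finally I would split into cases. If $\mu_1$ is singular, then $\mu$ is singular (it lives on $A\times[0,1)$ where $A$ is a Lebesgue-null carrier of $\mu_1$), its support $K$ is compact, $\mu_1(I)\leq C\abs{I}^\alpha$, and $\dim_{H}(K)<\alpha+1$, so Theorem \ref{Thm-A} applies and produces a type \eqref{SingMarg} expansion. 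If $\mu_1$ is Lebesgue, then every column sum equals $\frac1m$, forcing $\gamma=\frac1m$ and $\alpha=1$; then $\dim_{H}(K)<\alpha+1=2$ shows $K$ is planar-null, so $\mu$ is singular, and since $\mu_1$ is essentially equivalent to Lebesgue measure Theorem \ref{SingWithLebMarg} produces a type \eqref{LebMarg} expansion.

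I do not expect a serious obstacle, as the square case is strictly easier than the rectangular one already handled. The only points deserving care are (i) invoking the dimension formula at $m=n$, where the carpet leaves the strictly anisotropic Bedford-McMullen regime and becomes self-similar, so one should either cite the classical similarity-dimension result or note that McMullen's formula remains valid in this limiting case; and (ii) in the Lebesgue-marginal subcase, confirming that $\mu$ is planar-singular so that Theorem \ref{SingWithLebMarg} is applicable, and arranging (as in Corollary \ref{Cor-A}) that $\mu$ is supported on a half-open square so that the one-dimensional machinery behind Theorem \ref{Thm-23} can be used.
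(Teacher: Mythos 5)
Your proposal is correct and follows essentially the same route as the paper: the paper's entire proof consists of dispatching the trivial case $m=n=1$ and then applying Corollary \ref{Cor-A} with $n=m\geq 2$, which is exactly the reduction you identify (via $\xi_1=\sum_j r_j=N$ when $m=n$), with your remaining text just unpacking that corollary's internal argument in the square case. Your caution about the strict inequality $\ln(m)/\ln(n)<1$ is reasonable but not needed --- the monotonicity argument in Corollary \ref{Cor-A} only requires $\ln(m)/\ln(n)\leq 1$, where the comparison degenerates to the equality $N=\xi_1$ you note --- and the only point you omit is the case $m=1$, which is vacuous under the hypothesis (there $\gamma=1=m/N$) and which the paper handles in one line.
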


\begin{proof}If $m=n=1$, then $\mu$ is Lebesgue measure and the result is obvious. Otherwise, apply Corollary \ref{Cor-A} with $n=m\geq 2$.
\end{proof}

\begin{example}[Measures on the Sierpinski Carpet]Consider the Bedford-McMullen carpet $K$ generated by the IFS corresponding to the matrix \begin{equation*}T=\begin{bmatrix}1&1&1\\1&0&1\\1&1&1\end{bmatrix}.\end{equation*} Then $K$ is the familiar Sierpinski carpet, also called the Menger Mesh.

It would first behoove us to consider the most popular measure associated to this IFS, the invariant measure $\mu$ obtained when all eight kept cells are weighted equally. That is, the weights $p_{ij}$ are given by the entries of \begin{equation*}P=\begin{bmatrix}\frac{1}{8}&\frac{1}{8}&\frac{1}{8}\\\frac{1}{8}&0&\frac{1}{8}\\\frac{1}{8}&\frac{1}{8}&\frac{1}{8}\end{bmatrix},\end{equation*} where $P$ is indexed as $T$ is. We observe that the projection of the IFS to either axis results in weights $\set{\frac{3}{8},\frac{2}{8},\frac{3}{8}}$. Thus both marginal measures $\mu_1$ and $\mu_2$ are the same, and since the projected weights are unequal, by Theorem $\ref{Thm:KakutaniIFS}$, both $\mu_1$ and $\mu_2$ are singular. Thus by Corollary \ref{Cor_Collect}, $\mu$ is bi-slice-singular and admits sequential Fourier expansions of type $\eqref{SingMarg}$.

Now, suppose we use a different weight set. Let instead
$$P=\begin{bmatrix}\frac{91}{900}&\frac{118}{900}&\frac{91}{900}\\\frac{150}{900}&0&\frac{150}{900}\\\frac{91}{900}&\frac{118}{900}&\frac{91}{900}\end{bmatrix}.$$
Let $\nu$ be the invariant measure associated to the IFS with these weights. To understand this measure, we display the cells and the probability associated to them after one, two, three, and four iterations of this IFS:

\begin{figure}[H]\includegraphics[scale=.55]{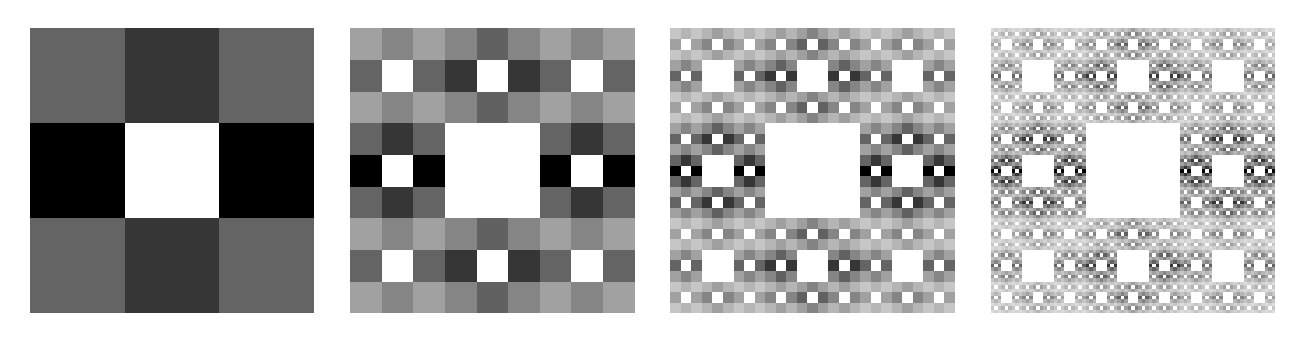}\caption{Sierpinski Carpet IFS with Some Unequal Weights}\end{figure}

One can see the Sierpinski carpet emerging as the attractor $K$ of the IFS, but because the weights are unequal, at each iteration some of the remaining parts have greater probability than others. In these images white represents zero probability, while cells with the greatest probability in that iteration are in black, and proportionally intense grays represent probabilities in between. The measure $\nu$ will be supported on the Sierpinski carpet just as $\mu$ is, but it will be a measure singular not only with respect to Lebesgue measure, but by the Kakutani Theorem it will also be singular with respect to $\mu$.

Examining $P$, we see that projected to the $y$-axis, the weights are $\set{\frac{1}{3},\frac{1}{3},\frac{1}{3}}$. Thus $\nu_2$ is Lebesgue measure. Projected to the $x$-axis, the weights are $\set{\frac{332}{900},\frac{236}{900},\frac{332}{900}}$, and thus $\nu_1$ is singular. By Corollary \ref{Cor_Collect}, $\nu$ admits Fourier series of type $\eqref{LebMarg}$. However, by using Corollary $\ref{Cor:SquareCarpet}$ we can say more. Note that $K$ comes from a square grid IFS with $m=3$ and $N=8$. The largest total weight in a column is $\gamma=\frac{91}{900}+\frac{150}{900}+\frac{91}{900}=\frac{332}{900}<\frac{3}{8}=\frac{m}{N}.$ Since $\nu_1$ is singular, by Corollary \ref{Cor:SquareCarpet} we see that $\nu$ admits Fourier expansions of type $\eqref{SingMarg}$. Indeed, the corollary shows that $\nu$ is $x$-slice-singular, though because $\nu_2$ is Lebesgue measure it is not bi-slice-singular.

\end{example}

Corollary \ref{Cor-A} does not apply in the case that all the entries of $T$ are 1, i.e. that the IFS keeps all the cells, because it that case the needed conditions reduce to $\gamma_1<\frac{1}{n}$ or $\gamma_2<\frac{1}{m}$, each of which would imply the weights add up to less than 1, which is a contradiction. The following result helps us address that case.

\begin{theorem}\label{Thm-fullgrid}Let $\mu$ be the invariant measure of a grid IFS that keeps all cells with nonzero weights. Suppose $\mu$ has a singular $y$-marginal ($x$-marginal) measure. Let $\beta_{\max}$ denote the maximum total weight in a row (column), and let $b$ denote the number of rows (columns) within which the weights are equal. If $\beta_{\max}<\frac{1}{b}$ (or if $b=0$), then $\mu$ is slice singular and hence admits Fourier expansions of form \eqref{SingMarg}.
\end{theorem}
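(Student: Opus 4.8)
The plan is to handle this full-grid case by a direct Kakutani argument on the individual slice measures, since here the attractor is all of $[0,1]^2$ (dimension two) and the Marstrand-slicing approach behind Corollary \ref{Cor-A} and Theorem \ref{Thm-A} is unavailable. Because the IFS keeps every cell, each contraction $\tau_{ij}$ appends one base-$n$ digit $i$ to $x$ and one base-$m$ digit $j$ to $y$, so (as developed in Appendix \ref{App:SymbolicRepresentations}) the invariant measure $\mu$ is realized by choosing the digit pairs $(a_k,b_k)_{k\ge1}$ independently, each with distribution $(p_{ij})$, and setting $x=\sum_k a_k n^{-k}$, $y=\sum_k b_k m^{-k}$. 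First I would use this to identify the Rokhlin slice $\rho^y$ explicitly: conditioning on the $y$-digits $(b_k)$ leaves the $x$-digits $(a_k)$ conditionally independent, with $a_k$ distributed as $(p_{i b_k}/v_{b_k})_i$, where $v_j=\sum_i p_{ij}$ is the $j$-th row sum (equivalently the weight of the $y$-marginal IFS). Thus $\rho^y$ is the image under the base-$n$ coding map of the infinite product $\bigotimes_k q^{(k)}$ with $q^{(k)}_i=p_{i b_k}/v_{b_k}$; checking that this family meets the measurability and integral identities of Rokhlin's theorem (so that it really is the disintegration, by $\mu_2$-a.e.\ uniqueness) is a routine Fubini computation on the digit-product space.

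Next I would compare each $\rho^y$ with Lebesgue measure $\lambda$ on $[0,1)$, which is the image of the product of uniform digit distributions $u=(1/n,\dots,1/n)$. Since all $p_{ij}>0$, each $q^{(k)}$ is mutually absolutely continuous with $u$, so the theorem of Kakutani (the $1948$ product-measure criterion stated above) applies on the digit-product space and transfers to $[0,1)$ because the coding map is a measure-isomorphism off the $n$-adic rationals, a countable set that is null for $\lambda$ and for the non-atomic $\rho^y$. The relevant Kakutani affinity is $\rho(u,q^{(k)})=\sum_i\sqrt{q^{(k)}_i/n}=c_{b_k}/\sqrt n$, where $c_j=v_j^{-1/2}\sum_i\sqrt{p_{ij}}$. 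By Cauchy--Schwarz, $c_j/\sqrt n\le1$ with equality if and only if $p_{ij}$ is constant in $i$, i.e.\ the $j$-th row has equal weights. Hence the Kakutani product $\prod_k(c_{b_k}/\sqrt n)$ is positive exactly when $b_k$ lands in an equal-weight row for all but finitely many $k$, and is otherwise zero, in which case $\rho^y\perp\lambda$ and $\rho^y$ is singular.

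It then remains to show that for $\mu_2$-almost every $y$ the digit sequence $(b_k)$ meets the unequal-weight rows infinitely often. Under $\mu_2$ the $b_k$ are independent and identically distributed with $P(b_k=j)=v_j$, so by the independent Borel--Cantelli lemma this holds almost surely as soon as $q:=\sum_{j\text{ unequal}} v_j>0$. This is where the hypothesis enters: the $b$ equal-weight rows have total weight at most $b\,\beta_{\max}$, where $\beta_{\max}=\max_j v_j$, so the assumption $\beta_{\max}<1/b$ (or $b=0$) gives $b\,\beta_{\max}<1$, whence $q=1-\sum_{j\text{ equal}}v_j\ge1-b\beta_{\max}>0$. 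Therefore $\rho^y$ is singular for $\mu_2$-almost every $y$, so $\mu$ is $y$-singular-fibered; combined with the assumed singularity of $\mu_2$ this makes $\mu$ $y$-slice-singular. Finally, $\mu$ is non-atomic (the IFS has at least two maps of positive weight), so after restricting to the appropriate half-open square Theorem \ref{Thm-23} delivers Fourier expansions of type \eqref{SingMarg}.

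The main obstacle I anticipate is the rigorous identification of the slice $\rho^y$ as the product measure $\bigotimes_k q^{(k)}$ and the clean transfer of Kakutani's dichotomy from the digit-product space down to $[0,1)$ --- in particular verifying that the candidate disintegration is genuinely the Rokhlin one and that the $n$-adic exceptional set is null for the relevant $\rho^y$. Once this bookkeeping is secured, the affinity computation, the Cauchy--Schwarz characterization of equal-weight rows, and the Borel--Cantelli step are all routine, and the only quantitative input is the elementary inequality $b\,\beta_{\max}<1$ extracted directly from the hypothesis.
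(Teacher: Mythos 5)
Your proposal is correct, and its first two stages coincide with the paper's own proof: the paper likewise identifies the slice $\rho^y$ (for non-$m$-adic $y$ with digits $c_k$) as the push-forward under the base-$n$ coding map of the product measure whose $k$-th factor is $\left(p_{ic_k}/\beta_{c_k}\right)_i$, verifies that this family really is the Rokhlin disintegration by checking agreement on $(n,m)$-adic rectangles (a $\pi$-system argument, with a monotone-class step to get kernel measurability --- this is exactly the ``bookkeeping'' you deferred, and it occupies a substantial part of the paper's proof, so do not underestimate it), and then applies Kakutani's dichotomy factor-by-factor just as you do. Where you genuinely diverge is the last step: showing that the exceptional set $\Pi_m(Y)$, consisting of points whose digit tails lie entirely in internally-equal-weight rows, is $\mu_2$-null. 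The paper proves this with dimension theory: it computes $\dim_{H}(\Pi_m(Y))=\ln(b)/\ln(m)$ by decomposing $Y$ into countably many bi-Lipschitz images of a sub-IFS attractor, establishes the Frostman bound $\mu_2(I)\leq(m+1)\abs{I}^{-\ln(\beta_{\max})/\ln(m)}$, and invokes the Mass Distribution Principle; the hypothesis $\beta_{\max}<\frac{1}{b}$ is precisely what forces the dimension below the Frostman exponent. You instead lift $\mu_2$ to the Bernoulli measure on the digit space (whose factor weights are the row sums) and apply the second Borel--Cantelli lemma: the digits visit the unequal-weight rows infinitely often almost surely provided those rows carry total weight $q>0$. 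Your route is more elementary --- no Hausdorff dimension, no Mass Distribution Principle --- and in fact sharper: since every cell carries positive weight, each row sum is positive, so $q>0$ holds whenever at least one row has unequal weights, i.e.\ whenever $b<m$. Thus your argument proves the conclusion under a strictly weaker hypothesis, with $\beta_{\max}<\frac{1}{b}$ entering only as a sufficient (but unnecessary) certificate that $q>0$. What the paper's approach buys in exchange is quantitative information about the exceptional set --- its exact Hausdorff dimension --- and methodological continuity with the Frostman/Marstrand estimates driving Theorem \ref{Thm-A} and Corollary \ref{Cor-A}; but as a proof of the stated theorem, your Borel--Cantelli step is cleaner and yields a stronger result.
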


\begin{proof}Consider a grid IFS generated by an $m\times n$ matrix $T$, where without loss of generality we suppose that $2\leq m\leq n$ and the $y$-marginal measure $\mu_2$ is singular. Let $p_{ij}$ be the weights of this IFS. For each row $j$, define $\beta_j=\sum_{i=0}^{n-1}p_{ij}$ to be the total weight in row $j$.

Now, fix a $y\in[0,1)$ that is non-$m$-adic. Then $y$ has a unique $m$-adic expansion $y=\sum_{k=0}^{\infty}\frac{c_k}{m^{k+1}}$, where each $c_k\in\set{0,1,\ldots,m-1}.$ Define a measure $\nu^y$ on $\set{0,\ldots,n-1}^{\mathbb{N}_0}$ via the Kolmogorov Extension Theorem by specifying its value on cylinder sets as follows:
\begin{align*}&\nu^y\p{\set{a_0}\times\set{a_1}\times\set{a_2}\times\cdots\times\set{a_\ell}\times\set{0,\ldots,n-1}^{\mathbb{N}_0}}\\
&=\frac{p_{a_0c_0}}{\beta_{c_0}}\cdot\frac{p_{a_1c_1}}{\beta_{c_1}}\cdot\frac{p_{a_2c_2}}{\beta_{c_2}}\cdots\cdot\frac{p_{a_\ell c_\ell}}{\beta_{c_\ell}}.
\end{align*}
Now define a measure $r^y$ on $[0,1)$ by $r(A)=\nu^y(\Pi_n^{-1}(A)),$ where
\begin{align*}\Pi_n:\set{0,\ldots,n-1}^{\mathbb{N}_0}\to\mathbb{R}:\omega\mapsto\sum_{k=0}^{\infty}\frac{\omega_k}{n^{k+1}}.
\end{align*}
It may be that for some rows $j$, the weight set $\set{p_{ij}}_{i=0}^{n-1}$ consists of equal weights, that is, $p_{ij}=\frac{\beta_j}{n}$ for all $i=0,\ldots n-1$.  Let $R$ denote the rows that do not consist of all equal weights, that is, $$R:=\set{j\in\set{0,\ldots,m-1}:\exists i,i^\prime, p_{ij}\neq p_{i^\prime j}}.$$ Also let $R^\mathcal{C}$ denote the set $\set{0,\ldots,m-1}\setminus R$.

Let $m$ denote the uniform measure on $\set{0,\ldots, n-1}$, and for each $k$, let $m_k$ denote the measure on $\set{0,\ldots,n-1}$ given by $m_k(\set{i})=\frac{p_{ic_k}}{\beta_{c_k}}$. As in the Kakutani Theorem, denote 
\begin{align*}\rho(m,m_k)=\int \sqrt{\frac{dm_k}{dm}}\,dm=\sum_{i=0}^{n-1}\sqrt{m_k(\set{i})m(\set{i}).}
\end{align*}
We have that for $c_k\in R$, $m_k\neq m$, and hence $\rho(m,m_k)<1$, whereas for $c_k\in R^\mathcal{C}$, $m_k=m$ and $\rho(m,m_k)=1$.

If $R$ is nonempty, then since $R$ is finite, $\max\set{\rho(m,m_k):c_k\in R}<1.$ Therefore, so long as infinitely many terms of the sequence $\set{c_k}$ are in $R$, we must have that \begin{align*}\prod_{k=0}^{\infty}\rho(m,m_k)=0.
\end{align*}
Then by the Kakutani Theorem, the measure $\nu^y=\times_{k=0}^{\infty}m_k$ is perpendicular to the uniform measure $\times_{k=0}^{\infty}m$, and hence $r^y$ is perpendicular to the Lebesgue measure on $[0,1).$ Otherwise, if $\set{c_k}$ has only finitely many terms in $R$, or what is equivalent, a tail consisting entirely of elements of $R^\mathcal{C}$, then clearly the product will not be $0$ and $r^y$ will be equivalent to Lebesgue measure. If $R$ is empty, that is to say, every row $j$ has weights that are equal within that row, then every sequence $\set{c_k}$ consists entirely of elements of $R^\mathcal{C}$, and $m_k=m$ for each $k$, and the measure $r^y$ is Lebesgue measure for any non-$m$-adic $y\in[0,1)$.

Now let $Y$ denote the set of all sequences in $\set{0,\ldots,m-1}^{\mathbb{N}_0}$ with tails in $R^\mathcal{C}$. By the preceding discussion, for a non-$m$-adic $y$, $r^y$ is equivalent to Lebesgue measure if $y\in \Pi_m(Y)$ and singular otherwise. 

Now, the $y$-marginal measure $\mu_2$ cannot have any atoms unless some row $j$ has full weight, that is, $\beta_j=1$. In this case, $\mu_2$ consists of a single atom, and the slice measure at this atom is either singular or Lebesgue according to the equality or non-equality of the weights in that row. However, this is impossible for a grid IFS in which all cells are kept with nonzero weights. Hence, $\mu_2$ has no atoms (and by the same logic, the $x$-marginal measure $\mu_1$ has no atoms either). So the set of $m$-adic numbers, being countable, has $\mu_2$ measure $0$. Thus by establishing that $\mu_2(\Pi_m(Y))=0$, we will have shown that $r^y$ is singular for $\mu_2$-almost-every-$y$.

Consider the case $b=0$. (In this case, by convention we may say $\beta_{\max}<\infty=\frac{1}{b}$.) We have $\abs{R^\mathcal{C}}=b=0$. So $R^\mathcal{C}$ is empty, implying $Y$ is empty, and so trivially, $r^y$ is singular for $\mu_2$-almost-every $y$.

In the case $b>0$, we have that $R^\mathcal{C}$ is not empty. We claim that $\dim_H(\Pi_m(Y))=\frac{\ln(b)}{\ln(m)}.$ Let $W$ denote the set of all finite words in $\set{0,\ldots,m-1}$ terminating in an element of $R$. For each $w\in W$, let $Y_w$ be the subset of $Y$ consisting of sequences that begin with $w$, followed by terms in $R^\mathcal{C}$. It is clear that the sets $Y_w$ are pairwise disjoint and that $Y=\cup_{w\in W}Y_w$, from which it follows that $\Pi_m(Y)=\cup_{w\in W}\Pi_m(Y_w)$. Since $W$ is countable, we have \begin{align*}\dim_{H}(\Pi_m(Y))=\sup_{w\in W}\dim_{H}(\Pi_m(Y_w)).\end{align*}
It thus suffices to show that for all $w\in W$, $\dim_{H}(\Pi_m(Y_w))=\frac{\ln(b)}{\ln(m)}.$ 

Define $\tau_j(y)=\frac{y}{m}+\frac{j}{m}$, and consider the IFS $\set{\tau_j:j\in R^\mathcal{C}}$. Let $K$ be the attractor of this IFS. We know that \begin{align*}\dim_{H}(K)=\frac{\ln(\abs{R^\mathcal{C}})}{\ln(m)}=\frac{\ln(b)}{\ln(m)}.\end{align*}
Now for a fixed $w\in W$, with $w=w_0w_1\ldots w_\ell$, observe that
\begin{align*}\Pi_m(Y_w)=\tau_{w_0}\circ\tau_{w_1}\circ\cdots\circ \tau_{w_\ell}(K).
\end{align*}
The map $\tau_{w_0}\circ\tau_{w_1}\circ\cdots\circ \tau_{w_\ell}:\mathbb{R}\to\mathbb{R}$ is bi-Lipschitz and therefore preserves Hausdorff dimension. Thus $\dim_{H}(\Pi_m(Y_w))=\frac{\ln(b)}{\ln(m)}$, and so \begin{align*}\dim_{H}(\Pi_m(Y))=\frac{\ln(b)}{\ln(m)}.\end{align*}

Now, let $\beta_{\max}=\max\set{\beta_j:j\in\set{0,\ldots,m-1}}$. Following the previously used technique, we see that for any interval $I$, $\mu_2(I)\leq (m+1)\abs{I}^\alpha$, where $\alpha=-\frac{\ln(\beta_{\max})}{\ln(m)}$. If $\mu_2(\Pi_m(Y))>0$, it would follow from the Mass Distribution Principle (see \cite{BisPer2017}, Lemma 1.2.8) that \begin{align*}\dim_{H}(\Pi_m(Y))\geq-\frac{\ln(\beta_{\max})}{\ln(m)}.\end{align*} So by the contrapositive, 
\begin{align*}\frac{\ln(b)}{\ln(m)}<-\frac{\ln(\beta_{\max})}{\ln(m)}\implies\mu_2(\Pi_m(Y))=0.\end{align*}
Equivalently,
\begin{align*}\beta_{\max}<\frac{1}{b}\implies\mu_2(\Pi_m(Y))=0.\end{align*}

Hence, $r^y$ is singular for $\mu_2$-almost-every-$y$ when $\beta_{\max}<\frac{1}{b}$. The proof will be completed by establishing that the measures $r^y$ are $\mu_2$-almost-everywhere equal to the measures $\rho^y$ from the Rokhlin disintegration\\ $\mu=\rho^y(dx)\,\mu_2(dy)$.

Now consider an $(n,m)$-adic rectangle of the form
\begin{align*}R_\sigma=\left[x_0,x_0+\frac{1}{n^K}\right)\times\left[y_0,y_0+\frac{1}{m^{K}}\right),\end{align*}
where
\begin{align*}x_0&=\sum_{k=0}^{K-1}\frac{d_k}{n^{k+1}}\\
y_0&=\sum_{k=0}^{K-1}\frac{c_k}{m^{k+1}}.
\end{align*}
In light of the fact that $\mu$'s marginal measures have no atoms,
\begin{align*}\mu(R_\sigma)=\prod_{k=0}^{K-1}p_{d_kc_k}.
\end{align*}
At the same time, by observing that all $y\in[y_0,y_0+\frac{1}{m^K})$ share the same first $K$ $m$-adic digits, we see that

\begin{align*}&\int\int\chi_{R_\sigma}\,r^y(dx)\,d\mu_2(dy)\\
=&\int_{[y_0,y_0+1/m^K)}r^y([x_0,x_0+1/n^K))\,d\mu_2(dy)\\
=&\int_{[y_0,y_0+1/m^K)}\prod_{k=0}^{K-1}\frac{p_{d_kc_k}}{\beta_{c_k}}\,d\mu_2(dy)\\
=&\p{\prod_{k=0}^{K-1}\frac{p_{d_k c_k}}{\beta_{c_k}}}\mu_2([y_0,y_0+1/m^K))\\
=&\p{\prod_{k=0}^{K-1}\frac{p_{d_k c_k}}{\beta_{c_k}}}\p{\prod_{k=0}^{K-1}\sum_{j=0}^{n-1}p_{jc_k}}\\
=&\p{\prod_{k=0}^{K-1}\frac{p_{d_k c_k}}{\beta_{c_k}}}\p{\prod_{k=0}^{K-1}\beta_{c_k}}\\
=&\prod_{k=0}^{K-1}p_{d_k c_k}.
\end{align*}

Let $\mathcal{F}$ denote the collection of all finite unions of $(n,m)$-adic rectangles of the above form. Clearly, $\mathcal{F}$ is an algebra over $[0,1)^2$. Define $\kappa:[0,1)\times\mathcal{B}([0,1)^2)\to[0,\infty)$ by $$\kappa(y,A)=\int\chi_{A}(x,y)\,r^y(dx).$$ (For an $m$-adic $y$, where $\mu_2$ is unsupported, $r^y$ may without consequence be regarded as Lebesgue measure for the purpose of defining $\kappa$.) The previous computation shows that for an $(n,m)$-adic rectangle $R_\sigma$, 
\begin{align*}\kappa(y,R_\sigma)=\begin{cases}\prod_{k=0}^{K-1}p_{d_kc_k}&y\in[y_0,y_0+\frac{1}{m^K})\text{ and y non-m-adic}\\\frac{1}{n^K}&y\in[y_0,y_0+\frac{1}{m^K})\text{ and y m-adic}\\0&\text{otherwise}\end{cases}\end{align*}
is a simple function in $y$ and hence measurable in $y$. It follows that it is measurable for any $F\in\mathcal{F}$, and because $\mathcal{F}$ generates the Borel sigma algebra, by the Monotone Class Theorem $\kappa(y,A)$ is thus measurable in $y$ for any Borel set $A$ (see Appendix \ref{App:fullgrid}). Thus, $\kappa(y,A)$ is a Markov transition kernel, from which it follows that $r^y(dx)\,\mu_2(dy)$ defines a measure on $[0,1)^2$.

Since the $(n,m)$-adic rectangles of the above form are a $\pi$-system on $[0,1)^2$ that generates the Borel sigma algebra, and $\mu(dx\,dy)$ and $r^y(dx)\,\mu_2(dy)$ coincide on this system, we have that $\mu=r^y(dx)\,\mu_2(dy)$. By the uniqueness of the Rokhlin disintegration, we then have that $r^y=\rho^y$ for $\mu_2$-almost-every $y$, which completes the proof. 
\end{proof}

\begin{example}Consider the grid IFS and weight set given by \begin{equation}T=\begin{bmatrix}1&1&1&1\\1&1&1&1\\1&1&1&1\\1&1&1&1\end{bmatrix}, P=\begin{bmatrix}\frac{3}{32}&\frac{3}{32}&\frac{3}{32}&\frac{3}{32}\\\frac{1}{32}&\frac{1}{32}&\frac{2}{32}&\frac{2}{32}\\\frac{2}{32}&\frac{2}{32}&\frac{1}{32}&\frac{1}{32}\\\frac{2}{32}&\frac{2}{32}&\frac{2}{32}&\frac{2}{32}\end{bmatrix}.\end{equation}
This is a full grid IFS, with every cell kept. The attractor is the entire unit square. However, with the weight set being unequal, the invariant measure $\mu$ will be singular by Theorem \ref{Thm:KakutaniIFS}. We give a graphical representation of the first three iterations of this IFS with these weights:

\begin{figure}[H]\includegraphics[scale=.55]{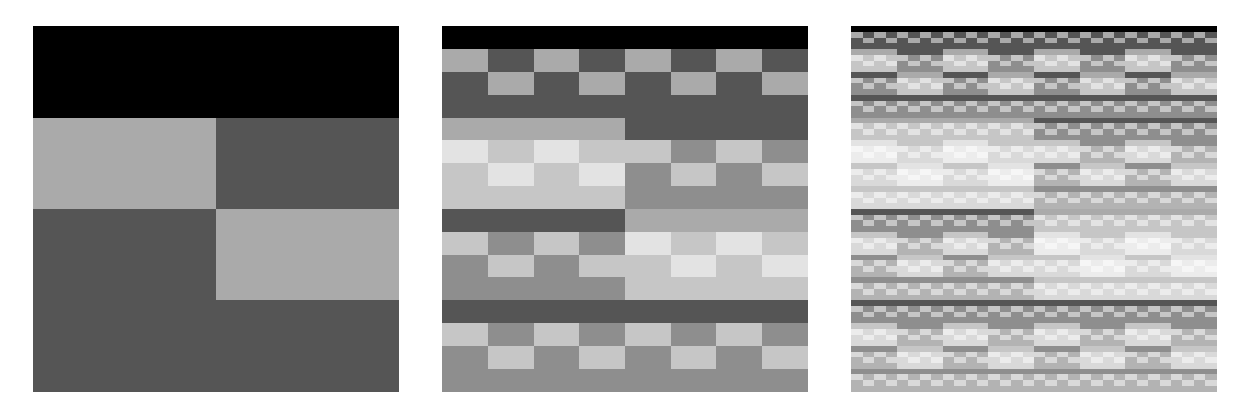}\caption{A full grid IFS}
\end{figure}

Note that the projection projection of the IFS to the $x$-axis is an IFS with weight set $\set{\frac{1}{4},\frac{1}{4},\frac{1}{4},\frac{1}{4}}$. Thus the $x$-marginal measure $\mu_1$ is Lebesgue measure. We thus have Fourier series of type $\eqref{LebMarg}$ by Corollary \ref{Cor_Collect}, but we can say more.

The projection of the IFS to the $y$-axis is an IFS with weight set $\set{\frac{8}{32},\frac{6}{32},\frac{6}{32},\frac{12}{32}}$. We thus see that $\mu_2$ is a singular measure, and the maximum total weight in a row is $\beta_{\max}=\frac{12}{32}.$ Further, we see that two of the rows have equal weights within themselves, so $b=2$. Thus $\beta_{\max}=\frac{12}{32}<\frac{1}{2}=\frac{1}{b}$. Theorem \ref{Thm-fullgrid} thus implies that the measure is $y$-slice-singular and hence has Fourier series of type \eqref{SingMarg}.
\end{example}

\section{Appendix}

\subsection{Theorem \ref{Thm-fullgrid} Detail}\label{App:fullgrid}We add some detail to the proof of Theorem \ref{Thm-fullgrid}. Let $\mathcal{B}$ denote the Borel sigma algebra on $[0,1)^2$. Recall that we defined\\ $\kappa:[0,1)\times\mathcal{B}\to\mathbb{R}$ by \begin{align*}\kappa(y,A)=\int\chi_{A}(x,y)\,r^y(dx),\end{align*}
where for each $y\in[0,1)$, $r^y$ was a Borel probability measure defined in a particular way from the grid IFS. We also showed that there exists an algebra $\mathcal{F}\subset\mathcal{B}$ generating $\mathcal{B}$ such that for any $F\in\mathcal{F}$, $\kappa(y,F)$ is measurable in the variable $y$.

Now define $\mathcal{H}=\set{X\in\mathcal{B}:\kappa(y,X)\text{ is measurable in }y}$. Clearly, $\mathcal{F}\subseteq\mathcal{H}$. It is evident from its definition that if $X\subseteq Y$, then $\kappa(y,X)\leq\kappa(y,Y)$. Therefore, if $X_1\subseteq X_2\subseteq X_3\subseteq\cdots$ is an ascending monotone sequence in $\mathcal{H}$, $(\kappa(y,X_n))_n$ is a bounded, monotone increasing sequence of functions, which therefore converges pointwise in the variable $y$. Indeed, by lower continuitiy of measure, $(\kappa(y,X_n))\to\kappa(y,\cup X_n)$ pointwise. Since a pointwise limit of meaurable functions is measurable, it follows that $\kappa(y,\cup X_n)$ is measurable, and therefore that $\cup X_n \in\mathcal{H}$. So $\mathcal{H}$ is closed under countable monotone unions, and by a similar argument, it is closed under countable monotone intersections. So, $\mathcal{H}$ is a monotone class.

By the Monotone Class Theorem for Sets, $\mathcal{B}$ is the smallest monotone class containing $\mathcal{F}$. Thus $\mathcal{B}\subseteq\mathcal{H}$, but also $\mathcal{H}\subseteq\mathcal{B}$ by definition, so in fact $\mathcal{H}=\mathcal{B}$. This proves that $\kappa(y,A)$ is measurable in $y$ for any Borel set $A\in\mathcal{B}$.

\subsection{Invariant Measures and IFS Weights}
\label{App:SymbolicRepresentations}
An IFS combined with a set of weights gives rise to an invariant measure $\mu$, but the weights only indirectly determine the values $\mu$ takes. We explain here. A more detailed treatment of this construction can be found in \cite{BarHocRap19}, and it also appears in a nice series of lectures by Solomyak given at Bar-Ilan in 2018.

Suppose $\set{\tau_{1},\tau_{2},\ldots,\tau_{N}}$ is an IFS on a complete metric space $X$. Let $(\omega_k)$ be a sequence in $(\mathbb{Z}_N)^{\mathbb{N}_0}.$ Then $(\omega_k)$ is a \textbf{symbolic address} of the element $x$ given by \begin{equation}x=\lim_{k\to\infty}\tau_{\omega_k}\circ\tau_{\omega_{k-1}}\circ\cdots\circ\tau_{\omega_0}(x_0).\end{equation}

It does not matter what $x_0$ is; the limit will converge to $x$ regardless. Thus we can define $\Pi:(\mathbb{Z}_N)^{\mathbb{N}_0}\to X$ by \begin{equation}\Pi(\omega)=\lim_{k\to\infty}\tau_{\omega_k}\circ\tau_{\omega_{k-1}}\circ\cdots\circ\tau_{\omega_0}(x_0).\end{equation}

The set of all symbolic addresses of $x$ is then the preimage $\Pi^{-1}(\set{x})$. Essentially, a sequence of contractions corresponds to a unique point of the metric space, though there can be multiple sequences corresopnding to the same point.

Now, suppose $\set{w_1,\ldots,w_N}$ is a set of nonzero weights with $\sum w_k=1$. These weights can be used to define a Bernoulli product measure $\nu$ on $(\mathbb{Z}_N)^{\mathbb{N}_0}$ by defining $\nu$ on cylinder sets as \begin{equation}\nu\p{\set{\omega_0}\times\set{\omega_1}\times\cdots\times\set{\omega_k}\times(\mathbb{Z}_N)^{\mathbb{N}_0}}=\prod_{\ell=0}^{k}w_{\omega_\ell}\end{equation}
and then extending it to the whole of $(\mathbb{Z}_N)^{\mathbb{N}_0}$ via the Kolmogorov Extension Theorem.

Finally, the invariant measure $\mu$ for this IFS with these weights is given by \begin{equation}\mu(X)=\nu(\Pi^{-1}(X)).\end{equation}

Now, in the situation of a two-dimensional grid IFS, the metric space is $\mathbb{R}^2$, and we have an $m\times n$ matrix $T$ such that the contractions are given by 
\begin{equation*}\set{\tau_{ij}(x,y)=\p{\frac{x}{n}+\frac{i}{n},\frac{y}{m}+\frac{j}{m}}:t_{ij}=1}\end{equation*}
and a set of weights given by $P=(p_{ij})$.

We can then let \begin{equation}\Omega=\set{(i,j):t_{ij}=1}^{\mathbb{N}_0}\subseteq(\mathbb{Z}_n\times\mathbb{Z}_m)^{\mathbb{N}_0}.\end{equation}

We define the Bernoulli product measure $\nu$ on $\Omega$ by defining it on cylinder sets as
\begin{equation}\nu\p{\set{x_0,y_0}\times\set{x_1,y_1}\times\cdots\times\set{x_k,y_k}\times\Omega}=\prod_{\ell=0}^{k}p_{x_\ell y_\ell}\end{equation}
and then extending it via the Kolmogorov Extension Theorem. We define $\Pi:\Omega\to\mathbb{R}^2$ by \begin{align*}\Pi((x_\ell,y_\ell)_\ell)=&\lim_{k\to\infty}\tau_{x_ky_k}\circ\tau_{x_{k-1}y_{k-1}}\circ\cdots\circ\tau_{x_0y_0}(0,0)\\
=&\p{\sum_{k=0}^{\infty}\frac{x_k}{n^{k+1}},\sum_{k=0}^{\infty}\frac{y_k}{m^{k+1}}}.\end{align*}
The invariant measure $\mu$ will satisfy \begin{equation}\mu(X)=\nu(\Pi^{-1}(X)).\end{equation}

 It is due to the fact that $m$-adic (or $n$-adic) numbers have two expansions, one that ends in trailing $0$'s for its digits and another that ends in trailing $(m-1)$'s, that $\Pi$ is not injective, leading to the potential difference between the weights $p_{ij}$ and the value of $\mu$ on closed $(n,m)$-adic rectangles.

\bibliographystyle{plain} 
\bibliography{bib2024} 

\begin{thebibliography}{10}

\bibitem{BarFerSim12}
Bal\'{a}zs B\'{a}r\'{a}ny, Andrew Ferguson, and K\'{a}roly Simon.
\newblock Slicing the {S}ierpi\'{n}ski gasket.
\newblock {\em Nonlinearity}, 25(6):1753--1770, 2012.

\bibitem{BarHocRap19}
Bal\'azs B\'ar\'any, Michael Hochman, and Ariel Rapaport.
\newblock Hausdorff dimension of planar self-affine sets and measures.
\newblock {\em Invent. Math.}, 216(3):601--659, 2019.

\bibitem{BisPer2017}
Christopher~J. Bishop and Yuval Peres.
\newblock {\em Fractals in probability and analysis}, volume 162 of {\em
  Cambridge Studies in Advanced Mathematics}.
\newblock Cambridge University Press, Cambridge, 2017.

\bibitem{BurFalFra21}
Stuart~A. Burrell, Kenneth~J. Falconer, and Jonathan~M. Fraser.
\newblock Projection theorems for intermediate dimensions.
\newblock {\em J. Fractal Geom.}, 8(2):95--116, 2021.

\bibitem{ChaPol97}
J.~T. Chang and D.~Pollard.
\newblock Conditioning as disintegration.
\newblock {\em Statist. Neerlandica}, 51(3):287--317, 1997.

\bibitem{Falc03}
Kenneth Falconer.
\newblock {\em Fractal geometry}.
\newblock John Wiley \& Sons, Inc., Hoboken, NJ, second edition, 2003.
\newblock Mathematical foundations and applications.

\bibitem{FalFraJin15}
Kenneth Falconer, Jonathan Fraser, and Xiong Jin.
\newblock Sixty years of fractal projections.
\newblock In {\em Fractal geometry and stochastics {V}}, volume~70 of {\em
  Progr. Probab.}, pages 3--25. Birkh\"auser/Springer, Cham, 2015.

\bibitem{FalMat16}
Kenneth Falconer and Pertti Mattila.
\newblock Strong {M}arstrand theorems and dimensions of sets formed by subsets
  of hyperplanes.
\newblock {\em J. Fractal Geom.}, 3(4):319--329, 2016.

\bibitem{FalFraKae23}
Kenneth~J. Falconer, Jonathan~M. Fraser, and Antti K\"{a}enm\"{a}ki.
\newblock Minkowski dimension for measures.
\newblock {\em Proc. Amer. Math. Soc.}, 151(2):779--794, 2023.

\bibitem{FalFraShm21}
Kenneth~J. Falconer, Jonathan~M. Fraser, and Pablo Shmerkin.
\newblock Assouad dimension influences the box and packing dimensions of
  orthogonal projections.
\newblock {\em J. Fractal Geom.}, 8(3):247--259, 2021.

\bibitem{HerJorWeb2023}
John~E. Herr, Palle E.~T. Jorgensen, and Eric~S. Weber.
\newblock Fourier series for fractals in two dimensions.
\newblock In {\em From classical analysis to analysis on fractals. {V}ol. 1.
  {A} tribute to {R}obert {S}trichartz}, Appl. Numer. Harmon. Anal., pages
  183--229. Birkh\"auser/Springer, Cham, [2023] \copyright2023.

\bibitem{HerWeb2017}
John~E. Herr and Eric~S. Weber.
\newblock Fourier series for singular measures.
\newblock {\em Axioms}, 6(2), 2017.

\bibitem{Hoch14}
Michael Hochman.
\newblock Lectures on dynamics, fractal geometry, and metric number theory.
\newblock {\em J. Mod. Dyn.}, 8(3-4):437--497, 2014.

\bibitem{Hutchinson1981Fractals}
John~E. Hutchinson.
\newblock Fractals and self-similarity.
\newblock {\em Indiana Univ. Math. J.}, 30(5):713--747, 1981.

\bibitem{Jor2018}
Palle E.~T. Jorgensen.
\newblock {\em Harmonic analysis}, volume 128 of {\em CBMS Regional Conference
  Series in Mathematics}.
\newblock American Mathematical Society, Providence, RI, 2018.
\newblock Smooth and non-smooth, Published for the Conference Board of the
  Mathematical Sciences.

\bibitem{JorPed98}
Palle E.~T. Jorgensen and Steen Pedersen.
\newblock Dense analytic subspaces in fractal {$L^2$}-spaces.
\newblock {\em J. Anal. Math.}, 75:185--228, 1998.

\bibitem{Kakutani1948Equivalence}
Shizuo Kakutani.
\newblock On equivalence of infinite product measures.
\newblock {\em Ann. of Math. (2)}, 49:214--224, 1948.

\bibitem{Mar54}
J.~M. Marstrand.
\newblock Some fundamental geometrical properties of plane sets of fractional
  dimensions.
\newblock {\em Proc. London Math. Soc. (3)}, 4:257--302, 1954.

\bibitem{McM1984}
Curt McMullen.
\newblock The {H}ausdorff dimension of general {S}ierpi\'nski carpets.
\newblock {\em Nagoya Math. J.}, 96:1--9, 1984.

\bibitem{Olsen15}
Lars Olsen.
\newblock On simultaneous local dimension functions of subsets of {$\Bbb R^d$}.
\newblock {\em Bull. Korean Math. Soc.}, 52(5):1489--1493, 2015.

\bibitem{Olsen19}
Lars Olsen.
\newblock Average box dimensions of typical compact sets.
\newblock {\em Ann. Acad. Sci. Fenn. Math.}, 44(1):141--165, 2019.

\bibitem{Aritro24}
Aritro Pathak.
\newblock A discrete {M}arstrand type slicing theorem with the mass dimension.
\newblock {\em Electron. J. Combin.}, 31(3):Paper No. 3.13, 22, 2024.

\bibitem{Rokh47}
V.~Rohlin.
\newblock On the classification of measurable decompositions.
\newblock {\em Doklady Akad. Nauk SSSR (N.S.)}, 58:29--32, 1947.

\bibitem{Rohlin1949Decomposition}
Vladimir~Abramovich Rokhlin.
\newblock On the decomposition of a dynamical system into transitive
  components.
\newblock {\em Mat. Sbornik N.S.}, 25(67):235--249, 1949.

\bibitem{Rohlin1949Fundamental}
Vladimir~Abramovich Rokhlin.
\newblock On the fundamental ideas of measure theory.
\newblock {\em Mat. Sbornik N.S.}, 25/67:107--150, 1949.

\bibitem{Sim12}
David Simmons.
\newblock Conditional measures and conditional expectation; {R}ohlin's
  disintegration theorem.
\newblock {\em Discrete Contin. Dyn. Syst.}, 32(7):2565--2582, 2012.

\end{thebibliography}
\end{document}